\documentclass[preprint,12pt,number]{elsarticle}

\usepackage{amssymb, amsthm, amsmath, amsfonts}
\usepackage{tikz,cancel, enumerate, url, mathrsfs}

\usepackage{amssymb, amsthm, amsmath, amsfonts}
\usepackage{cancel, enumerate}
\usepackage{rotating, environ}
\usepackage{comment}

\NewEnviron{hint}{
	\rotatebox[origin=c]{180}{
		\begin{minipage}[t]{\linewidth}
			\BODY
		\end{minipage}
	}
}

\usepackage[hypertexnames=false]{hyperref}

\usepackage{mathrsfs}     






\newcommand{\al}{\alpha}

\newcommand{\eps}{\varepsilon}

\newcommand{\Om}{\Omega}



\newcommand{\di}{\,\mathrm{d}}

\newcommand{\re}{\mathbb{R}}

\newcommand{\mc}{\mathcal}



\newcommand{\sgn}{\operatorname{sgn}}
\newcommand{\dist}{\operatorname{dist}}
\newcommand{\supp}{\operatorname{supp}}




\newtheorem{theorem}{Theorem}[section]
\newtheorem{lemma}[theorem]{Lemma}
\newtheorem{proposition}[theorem]{Proposition}
\newtheorem{corollary}[theorem]{Corollary}
\newtheorem{thm}{Theorem}[section]
\theoremstyle{remark}
\newtheorem{rem}[thm]{Remark}
\numberwithin{equation}{section}
\theoremstyle{definition}
\newtheorem{defn}{Definition}

%

\title{A strong maximum principle for nonlinear nonlocal diffusion equations}




\journal{The Journal of Differential Equations}

\begin{document}

\begin{frontmatter}
\author[rvt]{Ravi Shankar\corref{cor1}}
\ead{r.shankar997@gmail.com}
\author[rvt]{Tucker Hartland\corref{cor2}}
\ead{tucker.hartland@gmail.com}

\cortext[cor1]{Principal Corresponding Author}
\cortext[cor2]{Corresponding Author}

\address[rvt]{Department of Mathematics, California State University: Chico, Chico, CA 95929}

\begin{abstract}
This is a study of a class of nonlocal nonlinear diffusion equations.  We present a strong maximum principle for nonlocal time-dependent Dirichlet problems.  Results are for bounded functions of space, rather than (semi)-continuous functions.  Solutions that attain interior global extrema must be identically trivial.  However, depending on the nonlinearity, trivial solutions may not be constant in space; they may have an infinite number of discontinuities, for example.  We give examples of nonconstant trivial solutions for different nonlinearities.  For porous medium-type equations, these functions do not solve the associated classical differential equations, even those in weak form.   We also show that these problems are globally wellposed for Lipschitz, nonnegative diffusion coefficients.
\end{abstract}
\begin{keyword}
nonlocal diffusion, nonlinear diffusion, partial integro-differential equations, porous medium equation, maximum principle, strong maximum principle, wellposedness, Dirichlet problem
\end{keyword}


\end{frontmatter}

\section{Introduction}\label{sec:intro}
``Nonlocal" interactions take place over large distances, while ``local" interactions are defined by nearby vicinities.  There are many natural phenomena that are described by nonlocal processes. The nonlocal nature of the classical vector potential gives rise to the Aharanov-Bohm effect \cite{StewartA-Bohm}. There are nonlocal diffusive models in epidemological sciences \cite{ahmed2007fractional,reluga2006model,tang2014sei}, and nonlocal dispersive models are applied to the description of material properties \cite{benvenuti2002thermodynamically, eringen1984theory,narendar2016wave, salehipour2015modified}. Further applications of nonlocal diffusion models include: image processing \cite{Gilboa2009,yang2013non,zhou2015evaluation}, visual saliency detection \cite{zhang2015study}, population models \cite{CF,tanzy2015nagumo}, and swarming systems \cite{bernoff2013nonlocal,morale2005interacting,MK}.

Our work concerns the following nonlocal nonlinear diffusion equation for function $u:[0,T]\times\re^N\to\re$:
\begin{align}\label{NDgen}
\begin{split}
u_t&=\int_{\re^N}k\bigl(u(t,x),u(t,y)\bigr)\bigl[u(t,y)-u(t,x)\bigr]J(x-y)\di y.
\end{split}
\end{align}
The kernel $\re^{n}\to\re$ is an integrable, nonnegative function supported in the unit ball.  The diffusion coefficient, or conductivity, $k:\re^2\to\re$ is a locally Lipschitz continuous, nonnegative function.  We seek solutions $u(t,x)$ that are continuous in time but $L^\infty$ in the spatial variables.  Inside a bounded open set $\Om\subset\re^N$, $u$ solves \eqref{NDgen}, but we set $u(t,x)=\psi(x)$ on $\re^N\setminus\Om$.  This is a \emph{Dirichlet} condition in the nonlocal setting, analogous to setting $u(t,x)=\psi(x)$ on $\partial\Omega$ in a classical (local) Dirichlet problem.

Equation \eqref{NDgen} is the nonlocal analogue of a classical nonlinear diffusion equation:
\begin{align}\label{NDclass}
u_t=\nabla\cdot(k(u)\nabla u),
\end{align}
where the diffusion coefficient $k(u)$ accounts for nonlinear material properties, such as flow through a porous medium.  Equation \eqref{NDclass} can be obtained from \eqref{NDgen} by rescaling the kernel $J$ and taking the limit as a nonlocality ``horizon" parameter vanishes, see \cite{Rossi,DGLZ2013}.  It can also be viewed as the first Taylor approximation to \eqref{NDgen} \cite{Coville2005}, provided we take $k(u,u)=k(u)$.

The integro-differential equation \eqref{NDgen} contains, as a special case, the nonlocal \emph{linear} diffusion equation
\begin{align}\label{Dlin}
\begin{split}
u_t&= J \ast u -u\\
&=\int_{\re^N}\bigl[u(t,y)-u(t,x)\bigr]J(x-y)\di y,
\end{split}
\end{align}
where $\int J(z)\di z=1$ represents a probability density, and $u(t,x)$ represents a density for some quantity, such as temperature \cite{Florin} or population density \cite{CF}.  The first term represents the influx of species from a neighborhood surrounding the point $x$, while the second term accounts for population dispersion from $x$.

Equation (\ref{NDgen}) also includes other nonlocal diffusion equations, such as the nonlocal $p-$Laplace equation for $p\ge 2$ covered in \cite[Chapter 6]{Rossi}:
\begin{align}\label{pLap}
u_t=\int_{\re^N}|u(t,y)-u(t,x)|^{p-2}\bigl[u(t,y)-u(t,x)\bigr]J(x-y)\di y,
\end{align}
and the fast diffusion equation of \cite[Chapter 5]{Rossi}:
\[
\partial_t\gamma(u(t,x))=\int_{\re^N}\bigl[u(t,y)-u(t,x)\bigr]J(x-y)\di y.
\]
Assuming $\gamma$ is invertible, letting $v=\gamma(u)$ recovers \eqref{NDgen} by selecting $k(a,b)=(b-a)/(\gamma(b)-\gamma(a))$.  

Another physically interesting equation that can be obtained from \eqref{NDgen} is analogous to the porous medium equation $u_t=\Delta |u|^m/m=\nabla\cdot(|u|^{m-1}\nabla u)$:
\begin{align}
u_t=\int_{\re^N}|u(t,x)+u(t,y)|^{m-1}\bigl[u(t,y)-u(t,x)\bigr]J(x-y)\di y,
\end{align}
where $m>1$; however, to our knowledge, this equation has not been studied in the literature.  The closest that come to it are one using fractional differential operators \cite{Caffarelli}, a nonlinear diffusion PDE with a nonlocal convection term \cite{LiZhang}, and one that is considered in \cite{Bogoya}:
\begin{align}\label{bogoya}
u_t=\int_{\re^N}J\left(\frac{x-y}{u^\alpha(t,y)}\right)u^{1-N\alpha}(t,y)\di y-u(t,x).
\end{align}

Our work mostly concerns the strong maximum principle, a result that provides qualitative information about solutions as well as a tool for developing other \emph{a priori} analytic estimates.  For locally defined diffusion equations such as \eqref{NDclass}, the usual result for classical solutions $u\in C^1[0,T]\cap C^2(\Om)$ is as follows: 

\textit{If $u$ attains a global extremum inside the parabolic cylinder $[0,T]\times\Om$, then $u$ and its boundary values must be identically constant} \cite{Evans}.

An analogous result holds the for the associated time-independent elliptic problem \cite{Pucci,Bobkov}.  There are also similar results for discrete parabolic operators \cite{Mincsovics}, genuinely nonlinear PDEs for semi-continuous functions \cite{Philippin,Gripenberg}, and time-fractional and elliptic fractional differential equations \cite{Ye,Capella,Caffarelli}.  

For integro-differential equations similar to the type in \eqref{NDgen}, there are positivity principles for the linear steady-state problem for $L^2$ functions \cite{Garcia} and strong maximum principles for linear and semilinear traveling wave equations \cite{Coville,Coville2005}, linear and Bellman-type-nonlinear parabolic problems \cite{Paredes}, and degenerate parabolic and elliptic equations for semi-continuous functions involving L\'{e}vy-It\^{o}-type nonlocal operators \cite{Ciomaga,Jakobsen}.  

Our contributions are as follows:
\begin{itemize}
\item{\bf Strong Maximum Principle:} We formulate and prove a strong maximum principle for $L^\infty$ solutions to equations of the type \eqref{NDgen}.  Our results apply to, for example, solutions to a porous medium-type equation with $k(u,v)=|u+v|^{m-1}$, $m\ge 2$, or a $p$-Laplacian equation with $k(u,v)=|u-v|^{p-2}$, $p\ge 3$.  Previous strong maximum principles for parabolic nonlocal diffusion equations have only dealt with \emph{continuous} (or semi-continuous) solutions to equations with \emph{linear} diffusion.

\item{\bf Nonconstant Trivial Solutions:} Our consideration of \emph{nonlinear} diffusion shows that the maximum principle must be altered for certain types of nonlinearities.  Instead of constant solutions, the maximum principle implies the presence of \emph{trivial solutions}, which are, in general, \emph{not constant}; they may have an infinite number of discontinuities.  The precise nature depends on the nonlinearity $k$, and this is shown with examples.

\item{\bf Wellposedness:} We show that the Dirichlet problems considered are globally wellposed using a combination of a local result from the Banach fixed point theorem and the strong maximum principle.
\end{itemize}

In Section \ref{sec:nnde}, we describe the initial boundary value problem and give a local wellposedness proof.  Section \ref{sec:triv} discusses nonconstant trivial solutions and gives examples for different nonlinearities.  Section \ref{sec:smp} gives the proof of the strong maximum principle.  Finally, Section \ref{sec:gwellp} includes the global wellposedness result.

\section{Nonlocal nonlinear diffusion}
\label{sec:nnde}
After precisely outlining the initial-boundary value problem, we give a local wellposedness proof using Banach's fixed point theorem.

\subsection{The initial-boundary value problem}
Let $\Omega$ be a bounded open subset of $\re^N$ and $u:[0,T]\times\re^N\to\re$ be a function of time and space for some $T>0$.  We consider a nonlocal nonlinear diffusion equation:
\begin{align}\label{ND}
\begin{split}
u_t(t,x)&=\int_{\re^N}k\bigl(u(t,x),u(t,y)\bigr)[u(t,y)-u(t,x)]J(x-y)\di y,\\
&(t,x)\in[0,T]\times\Om.
\end{split}
\end{align}
We solve (\ref{ND}) for $u$ subject to a Dirichlet boundary condition
\begin{equation}\label{BC}
u(t,x)=\psi(x),\quad (t,x)\in[0,T]\times(\re^N\setminus\Om)
\end{equation}
and an initial condition
\begin{equation}\label{IC}
u(0,x)=u_0(x), \quad x\in \re^N.
\end{equation}

The kernel $J:\re^N\to\re$ is assumed to be nonnegative and integrable.  We assume $J(z)>0$ if $|z|< 1$, while $J(z)=0$ for $|z|\ge 1$.  
A common example in peridynamics is a \emph{radial} kernel \cite{DGLZ2013}: $J(z)=J(|z|)$, but this is not necessary here. 

The nonlinearity of the problem comes from $k:\re^2\to \re$.  The conductivity $k$ is assumed to be nonnegative.  We take it to be locally Lipschitz continuous in the following sense:
\begin{align}\label{Lip}
|k(u,v)-k(u',v')|\le K(|u|+|u'|,|v|+|v'|)\,\,\,(|u-u'|+|v-v'|),
\end{align}
where $K$ is nonnegative, nondecreasing, and continuous.  The nonnegativity is imposed for global wellposedness.

We will eventually specialize to classes of $k$ defined by
\begin{align}
\label{k3}
\tag{$k3$}
k(u,v)=0\Longrightarrow u=\pm v.
\end{align}
Subclasses include $k$ positive, $k$ positive definite, $k(u,v)=f(u\mp v)$ for $f$ positive definite, etc.  Physically relevant examples include $k(u,v)=|u-v|^{p-2}$ for $p\ge 3$ (nonlocal $p$-Laplacian equation, see \cite{Rossi}), and $k(u,v)=|u+v|^{m-1}$ for $m\ge 2$ (nonlocal porous medium equation).  We are most interested in the latter.


We seek solutions $u(t,x)$ to \eqref{ND} that are in $C([0,T])\cap L^\infty(\re^N)$.  We assume that $u_0(x)$ and $\psi(x)$ are in $L^\infty(\re^N)$ and $L^\infty(\re^N\setminus\Om)$, respectively, and that $u_0(x)=\psi(x)$ when $x$ is in $\re^N\setminus\Om$. We are imposing continuity of $u$ in time, which is the case in many physical applications. The requirement of $u$ being in $L^\infty$ as opposed to, say, $L^p$, is because the strong maximum principle does not have a straightforward interpretation for unbounded functions.  In addition, it allows us to consider more general, integrable kernels.

Using a standard Banach fixed point theorem technique, one can readily prove local wellposedness under the above assumptions.  Global wellposedness is proven in a following section after more preliminary results are established.  Some notations:
\begin{align*}
\|u(t,.)\|&=\text{ess}\sup_{x\in\re^N}|u(t,x)|,\\
\|\|u\|\|&=\max_{t\in[0,T]}\|u(t,.)\|.
\end{align*}
For each $T,\epsilon\ge0$, let $B_{T,\epsilon}$ be the Banach space of $C[0,T]\cap L^\infty(\re^N)$ functions that satisfy $u|_{\re^N\setminus\Om}=\psi$ and $\|\|u\|\|\le \epsilon$.  Let $B_{0,\epsilon}$ contain those $L^\infty(\re^N)$ functions with $\|u\|\le \epsilon$ and $B_{T}:=C[0,T]\cap L^\infty(\re^N)$.

\begin{thm}[Local wellposedness]\label{thm:wellp}
We can find a sufficiently small $T>0$ that depends on $u_0,J,$ and $k$ such that there exists a unique $B_T$ solution $u$ to the initial-boundary value problem \eqref{ND}-\eqref{IC}.
\end{thm}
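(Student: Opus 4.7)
The plan is to recast the initial-boundary value problem as a fixed-point equation and apply Banach's theorem on a closed ball in $B_T$. Integrating \eqref{ND} in time and incorporating the boundary and initial data, define the operator $\Phi$ by $\Phi(u)(t,x)=\psi(x)$ for $x\in\re^N\setminus\Om$ and
\[
\Phi(u)(t,x) = u_0(x) + \int_0^t\!\int_{\re^N} k\bigl(u(s,x),u(s,y)\bigr)\bigl[u(s,y)-u(s,x)\bigr]J(x-y)\di y\di s
\]
for $x\in\Om$. Any fixed point $u\in B_T$ of $\Phi$ is differentiable in $t$ at a.e.~$x$ and solves \eqref{ND}--\eqref{IC}, and continuity of $\Phi(u)$ in $t$ (so that $\Phi(u)\in B_T$) follows from dominated convergence, since the inner integrand is uniformly bounded on $[0,T]\times\Om\times\re^N$ whenever $u$ is bounded.

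Fix $\epsilon > \|u_0\|_{L^\infty(\re^N)}$, which also bounds $\|\psi\|_{L^\infty(\re^N\setminus\Om)}$ because $u_0|_{\re^N\setminus\Om}=\psi$. The continuity of $k$ on the compact square $[-\epsilon,\epsilon]^2$ supplies
\[
M(\epsilon) := \sup_{|a|,|b|\le\epsilon} k(a,b) < \infty,
\]
and for $u\in B_{T,\epsilon}$ the crude bound $|k(u(s,x),u(s,y))[u(s,y)-u(s,x)]|\le 2\epsilon M(\epsilon)$ yields
\[
\|\Phi(u)(t,\cdot)\|_{L^\infty(\Om)} \le \|u_0\|_{L^\infty} + 2\epsilon M(\epsilon)\|J\|_{L^1} T.
\]
Hence $\Phi(B_{T,\epsilon})\subseteq B_{T,\epsilon}$ provided $T$ is small enough that the right side is at most $\epsilon$.

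For the contraction estimate, I would use the add-and-subtract decomposition
\[
k(u,u')[u'-u] - k(v,v')[v'-v] = \bigl(k(u,u')-k(v,v')\bigr)[u'-u] + k(v,v')\bigl[(u'-v')-(u-v)\bigr],
\]
applied with $u=u(s,x)$, $u'=u(s,y)$, $v=v(s,x)$, $v'=v(s,y)$. Bounding the first summand via the local Lipschitz hypothesis \eqref{Lip} with constant $L(\epsilon):=K(2\epsilon,2\epsilon)$ and $|u'-u|\le 2\epsilon$, and the second by $M(\epsilon)$, yields for $u,v\in B_{T,\epsilon}$
\[
\|\|\Phi(u)-\Phi(v)\|\| \le \bigl(4L(\epsilon)\epsilon + 2M(\epsilon)\bigr)\|J\|_{L^1}\,T\,\|\|u-v\|\|.
\]
Shrinking $T$ further so the coefficient is strictly less than $1$ makes $\Phi$ a strict contraction on the complete metric space $B_{T,\epsilon}$, and Banach's theorem delivers a unique fixed point. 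Uniqueness in the full space $B_T$ follows because any $B_T$ solution is bounded, hence lies in some $B_{T,\epsilon'}$, and the same contraction argument (possibly on a shorter subinterval, then iterated) forces it to coincide with the constructed one. The main obstacle is really bookkeeping: because \eqref{Lip} is only local, both $M(\epsilon)$ and $L(\epsilon)$ depend on $\epsilon$, so one must order the choices carefully ($\epsilon$ first, then $T$) to ensure the self-mapping and contraction inequalities hold simultaneously on the same ball.
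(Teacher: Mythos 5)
Your proof is correct and follows the same overall architecture as the paper's (Banach fixed point on the integral operator, restricted to the ball $B_{T,\epsilon}$, with a self-mapping estimate and a contraction estimate), but the contraction estimate itself proceeds by a genuinely different decomposition. You write
\[
k_u\Delta_u - k_v\Delta_v = (k_u-k_v)\Delta_u + k_v(\Delta_u-\Delta_v),
\]
with $\Delta_u = u(s,y)-u(s,x)$, and bound $|k_v|$ by the crude sup $M(\epsilon)=\sup_{|a|,|b|\le\epsilon}k$, which is finite by continuity of $k$. The paper's estimate \eqref{est} instead splits each term $k_u u_y - k_v v_y = k_u(u_y-v_y)+(k_u-k_v)v_y$ individually and then bounds $|k_u|$ by $K(|u_x|,|u_y|)(|u_x|+|u_y|)$, a sublinear growth bound that follows from \eqref{Lip} only if one also knows $k(0,0)=0$ (take $u'=v'=0$ in \eqref{Lip}). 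That hidden hypothesis holds for the paper's motivating examples $k(u,v)=|u+v|^{m-1}$ and $|u-v|^{p-2}$, but not for a general locally Lipschitz $k$ (e.g.\ $k\equiv 1$), so your route is, strictly speaking, slightly more general and also avoids the cumbersome $K$-dependent form of the paper's bound \eqref{Aest}. The trade-off is purely cosmetic: the paper's estimate yields the clean explicit expression \eqref{T} for $T$, while yours produces a $T$ depending on both $M(\epsilon)$ and $L(\epsilon)$; both suffice for local existence. Your closing remark on ordering the choices ($\epsilon$ before $T$) and extending uniqueness from $B_{T,\epsilon}$ to all of $B_T$ by a covering/iteration argument is exactly the bookkeeping the paper leaves implicit.
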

\begin{proof}
We understand that, for each $u_0$, a $C[0,T]$ solution $u$ to \eqref{ND} and \eqref{IC} satisfies the following integral equation for $t\ge 0$ and $x$ in $\Omega$:
\begin{align}\label{inteqn}
u(t,x)=u_0(x)+\int_0^t\int_{\re^N}k\bigl(u(s,x),u(s,y)\bigr)[u(s,y)-u(s,x)]J(x-y)\di y\di s.
\end{align}
For each $w$ in $B_{0}$, $T\ge 0$, $t\in[0,T]$, and $x$ in $\Omega$, define $\mc A_{w,T}:B_T\to B_T$ as follows:
\begin{align}\label{Adef}
\begin{split}
&\mc A_{w,T}u(t,x):=w(x)\\
&+\int_0^t\int_{\re^N}k\bigl(u(s,x),u(s,y)\bigr)[u(s,y)-u(s,x)]J(x-y)\di y\di s,
\end{split}
\end{align}
with $\mc A_{w,T}u(t,x)=u(t,x)=\psi(x)$ for each $x$ in $\re^N\setminus\Om$.

Our objective is to show that $\mc A_{w,T}$ is a contraction mapping from $B_{T,\epsilon}$ into itself for some $T>0$ and some $0<\epsilon<\infty$.  If such parameters exist, then there exists a unique solution to the equation $u=\mc A_{u_0,T} u$ (i.e. \eqref{inteqn}) by Banach's fixed point theorem.

For brevity, set $k_u=k(u(s,x),u(s,y))$ and $u_x=u(s,x)$.  Then, for each $\epsilon>0$, $u_0,v_0\in B_{0}$, and $u,v\in B_{T}$, we have
\begin{align*}
\|\|\mc A_{u_0,T}u-\mc A_{v_0,T}v\|\|\le \|u_{0}-v_{0}\|+T\|J\|_{L^1}\|\|k_u(u_y-u_x)-k_v(v_y-v_x)\|\|,
\end{align*}
the last supremum taken over $x$ and $y$.  Note the following estimate (cf. \eqref{Lip}):
\begin{align}\label{est}
\begin{split}
&|k_u(u_y-u_x)-k_v(v_y-v_x)|\\
&\le |k_u(u_y-v_y)|+|(k_u-k_v)v_y|+\quad|k_u(v_x-u_x)|+|(k_v-k_u)v_x|\\
&\le K(|u_x|,|u_y|)(|u_x|+|u_y|)(|u_x-v_x|+|u_y-v_y|)\\
&+K(|u_x+v_x|,|u_y+v_y|)(|u_x-v_x|+|u_y-v_y|)(|v_x|+|v_y|)\\
&\le 4(\|\|u\|\|+\|\|v\|\|)K(\|\|u\|\|+\|\|v\|\|,\|\|u\|\|+\|\|v\|\|)\,\|\|u-v\|\|
\end{split}
\end{align}

Thus, for each $\epsilon>0$, $u_0,v_0\in B_{0}$, and $u,v\in B_{T}$:
\begin{align}\label{Aest}
\begin{split}
&\|\|\mc A_{u_0,T}u-\mc A_{v_0,T}v\|\|\le \|u_{0}-v_{0}\|\\
&+4T\|J\|_{L^1}(\|\|u\|\|+\|\|v\|\|)K(\|\|u\|\|+\|\|v\|\|,\|\|u\|\|+\|\|v\|\|)\,\|\|u-v\|\|.
\end{split}
\end{align}

We now fix $u_0$ to be the initial condition of \eqref{IC}.  First, set $v_0=v=0$ in \eqref{Aest}:
\begin{align*}
&\|\|\mc A_{u_0,T}u\|\|\le\|u_{0}\|+4T\|J\|_{L^1}\|\|u\|\|^2\,K(\|\|u\|\|,\|\|u\|\|).
\end{align*}
Set $\epsilon=2\|u_0\|$, and suppose that $u$ is in $B_{T,\epsilon}$.  Then
\begin{align}\label{T1}
\|\|\mc A_{u_0,T}u\|\|&\le \epsilon/2+4T\|J\|_{L^1}\epsilon^2K(\epsilon,\epsilon).
\end{align}
Next, set $v_0=u_0$ in \eqref{Aest}:
\begin{align*}
&\|\|\mc A_{u_0,T}u-\mc A_{u_0,T}v\|\|\le \\
&4T\|J\|_{L^1}(\|\|u\|\|+\|\|v\|\|)K(\|\|u\|\|+\|\|v\|\|,\|\|u\|\|+\|\|v\|\|)\,\|\|u-v\|\|.
\end{align*}
Suppose that $u$ and $v$ are both in $B_{T,\epsilon}$.  We get:
\begin{align}\label{T2}
\begin{split}
&\|\|\mc A_{u_0,T}u-\mc A_{u_0,T}v\|\|\le 8T\|J\|_{L^1}\,\epsilon\,K(2\epsilon,2\epsilon)\,\|\|u-v\|\|.
\end{split}
\end{align}
Estimates \eqref{T1} and \eqref{T2} hold for any $T\ge 0$.  Fix $T$ as follows:
\begin{align}\label{T}
T=\left[\,\,8\,\|J\|_{L^1}\,\epsilon\,\max\left(K(\epsilon,\epsilon)\,\,,\,\,2K(2\epsilon,2\epsilon)\right)\,\,\right]^{-1},
\end{align}
where $\epsilon=2\|u_0\|$.  Then estimates \eqref{T1} and \eqref{T2} respectively become:
\begin{align}\label{cont}
\begin{split}
\|\|\mc A_{u_0,T}u\|\|&\le 2\|u_0\|,\\
\|\|\mc A_{u_0,T}u-\mc A_{u_0,T}v\|\|&\le\|\|u-v\|\|/2.
\end{split}
\end{align}
Thus, for any $u_0\in B_0$, operator $\mc A_{u_0,T}$ maps $B_{T,\epsilon}$ into itself.  It is also a contraction.  The result follows from Banach's fixed point theorem.
\end{proof}

\begin{rem}
This result is not exhaustive by any means.  It is possible to slightly extend this to those $k$ that are (i) Lipschitz in the second argument, (ii) are differentiable away from $u-v=0$, (iii) have $(u-v)k(u,v)$ differentiable at $u-v=0$.  For example, this would include the case of $k(u,v)=|u-v|^{p-2}$ for $p>2$.

Still stronger results may also be obtained.  See e.g. \cite{Rossi} for wellposedness results in the case of $k(u,v)=|u-v|^{p-2}$, where $1\le p$, reaction-diffusion equations \cite{Sun}, and distributional solutions to anomalous diffusion equations \cite{Endal}.  Our analysis, while simple, was able to capture a large variety of nonlinearities $k$.  For example, combined with subsequent analysis, we find that all nonlocal porous medium equations with conductivity $k(u,v)=|u+v|^{m-1}$, $m>2$ are globally wellposed in the strong sense, which, for locally defined differential equations, requires much more effort; see e.g. \cite{Vazquez}.
\end{rem}

We need the following fact to prove the Strong Maximum Principle.
\begin{corollary}[Backwards wellposedness]\label{cor:back}
There exists a unique solution $u=u(t,x)$ on $[-T,0]\times\re^N$ to \eqref{ND} subject to \eqref{IC}-\eqref{BC}.
\end{corollary}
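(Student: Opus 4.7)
The plan is to reduce backwards wellposedness to the forwards case already established in Theorem \ref{thm:wellp} via a time-reversal substitution. Specifically, I would look for $u$ on $[-T,0]\times\re^N$ by setting $v(t,x):=u(-t,x)$ for $t\in[0,T]$ and noting that $u$ solves \eqref{ND}--\eqref{BC} on $[-T,0]\times\re^N$ with $u(0,\cdot)=u_0$ if and only if $v\in C([0,T])\cap L^\infty(\re^N)$ satisfies $v(0,\cdot)=u_0$, $v(t,\cdot)|_{\re^N\setminus\Om}=\psi$, and the time-reversed equation
\begin{align*}
v_t(t,x) = -\int_{\re^N} k\bigl(v(t,x),v(t,y)\bigr)\bigl[v(t,y)-v(t,x)\bigr] J(x-y)\di y,\quad (t,x)\in[0,T]\times\Om.
\end{align*}
Equivalently, $v$ must be a fixed point of the operator
\begin{align*}
\widetilde{\mc A}_{u_0,T}v(t,x) := u_0(x) - \int_0^t\int_{\re^N} k\bigl(v(s,x),v(s,y)\bigr)\bigl[v(s,y)-v(s,x)\bigr] J(x-y)\di y\di s
\end{align*}
on $\Om$, with $\widetilde{\mc A}_{u_0,T}v(t,x)=\psi(x)$ on $\re^N\setminus\Om$.

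Next I would observe that the Banach fixed point argument in the proof of Theorem \ref{thm:wellp} transfers verbatim. The key estimate \eqref{est} bounds $|k_u(u_y-u_x)-k_v(v_y-v_x)|$, and this bound is insensitive to the overall sign of the nonlinear term: replacing $\mc A_{u_0,T}$ by $\widetilde{\mc A}_{u_0,T}$ only flips the sign inside the time integral, which disappears upon taking absolute values. Consequently, with the same choice of $\epsilon=2\|u_0\|$ and the same $T$ from \eqref{T}, the estimates \eqref{cont} hold with $\mc A_{u_0,T}$ replaced by $\widetilde{\mc A}_{u_0,T}$, so $\widetilde{\mc A}_{u_0,T}$ is a contraction from $B_{T,\epsilon}$ into itself and admits a unique fixed point $v\in B_{T,\epsilon}$.

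Finally I would set $u(t,x):=v(-t,x)$ on $[-T,0]\times\re^N$ and verify directly from the fixed point identity that $u$ is continuous in $t$, bounded, equals $\psi$ on $\re^N\setminus\Om$, satisfies $u(0,x)=u_0(x)$, and solves the integral form of \eqref{ND} (and hence \eqref{ND} itself a.e.) on $[-T,0]\times\Om$. Uniqueness for $u$ follows from uniqueness of $v$: any backward solution $\tilde u$ gives a forward fixed point $\tilde v(t,x)=\tilde u(-t,x)$ of $\widetilde{\mc A}_{u_0,T}$, which must coincide with $v$.

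I do not expect a genuine obstacle here; the only point requiring care is to record that the Lipschitz bound \eqref{est} is symmetric under the sign change $k\mapsto -k$ (equivalently, under swapping $u_y-u_x$ with $u_x-u_y$), so that the same $T$ chosen in \eqref{T} still yields the contraction and invariance properties needed for the Banach fixed point theorem.
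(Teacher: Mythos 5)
Your proof is correct and follows the paper's argument exactly: time-reverse via $v(t,x)=u(-t,x)$, observe that the resulting equation is \eqref{ND} with $k\mapsto -k$, and note that the contraction estimates and the choice of $T$ in \eqref{T} are insensitive to this sign change, so Theorem \ref{thm:wellp} applies. You have simply spelled out the fixed-point details that the paper's one-line proof leaves implicit.
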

\begin{proof}
Setting $u(t,x)=v(-t,x)$, we find that $v$ satisfies \eqref{ND} but with $k\to -k$.  Since this sign change does not affect the time $T$ in \eqref{T}, we conclude that $v$ exists and is unique on $[0,T]\times\re^N$.
\end{proof}

By replacing $L^\infty(\re^N)$ with $C(\overline\Om)\cap L^\infty(\re^N)$ in the above formulation, we obtain another Banach space, so the wellposedness result also holds for continuous functions.  However, due to the nonlocal nature of \eqref{ND}, the continuity cannot, in general, be extended to the entirety of $\re^N$ (see e.g. \cite{Garcia,Paredes}).  In other words, we set $u=\psi$ on $\re^N\setminus\overline\Om$, rather than on $\re^N\setminus\Om$.  Note the space $C(\overline\Om)$ is the set of \emph{uniformly continuous} functions on $\Om$ extended to $\partial\Om$.  
\begin{corollary}\label{cor:wellpC}
We can find a sufficiently small $T>0$ that depends on $u_0,J,$ and $k$ such that there exists a unique $C([-T,T],C(\overline\Om),L^\infty(\re^N))$ solution $u$ to the initial-boundary value problem \eqref{ND}-\eqref{IC}.
\end{corollary}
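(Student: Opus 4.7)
The plan is to mimic the proof of Theorem \ref{thm:wellp} with the underlying Banach space replaced by a closed subspace that enforces uniform continuity of the spatial slice on $\overline\Om$. Define, for $\eps>0$, the space $\widetilde B_{T,\eps}$ consisting of $u\in B_{T,\eps}$ such that $u(t,\cdot)\in C(\overline\Om)$ for each $t$ and the map $t\mapsto u(t,\cdot)$ is continuous with values in $C(\overline\Om)$ under the sup norm. This is a closed subspace of $B_{T,\eps}$, so it is itself a complete metric space. Since $u_0\in C(\overline\Om)\cap L^\infty(\re^N)$ and $\psi\in L^\infty(\re^N\setminus\Om)$ with $u_0=\psi$ on $\re^N\setminus\Om$ (we interpret the continuity requirement on $\overline\Om$), the same choice of $\eps=2\|u_0\|$ and $T$ from \eqref{T} makes $\mc A_{u_0,T}$ a contraction on $B_{T,\eps}$ by Theorem \ref{thm:wellp}. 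It remains only to verify that $\mc A_{u_0,T}$ sends $\widetilde B_{T,\eps}$ into itself; uniqueness and existence in $\widetilde B_{T,\eps}$ then follow from Banach's fixed point theorem applied on this closed subset. Combining with Corollary \ref{cor:back} (which is proved by the same contraction on the reflected equation) extends the solution to $[-T,T]$.

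The only substantive point is thus to show that if $u\in\widetilde B_{T,\eps}$, then $(t,x)\mapsto\mc A_{u_0,T}u(t,x)$ is uniformly continuous jointly in $(t,x)\in[0,T]\times\overline\Om$. Continuity in $t$ follows immediately because the integrand in \eqref{Adef} is bounded (with bound depending only on $\eps$, $K$, and $\|J\|_{L^1}$), so $\mc A_{u_0,T}u(\cdot,x)$ is Lipschitz in $t$, uniformly in $x$. For continuity in $x$ on $\overline\Om$, fix $x,x'\in\overline\Om$. The contribution from the $u_0$ term is $|u_0(x)-u_0(x')|$, which is small by uniform continuity of $u_0\in C(\overline\Om)$. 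The integrand difference we must control is
\begin{align*}
&k\bigl(u_x,u_y\bigr)[u_y-u_x]J(x-y)-k\bigl(u_{x'},u_y\bigr)[u_y-u_{x'}]J(x'-y),
\end{align*}
with $u_x=u(s,x)$ etc. Split this as the sum of
\begin{align*}
\bigl\{k(u_x,u_y)[u_y-u_x]-k(u_{x'},u_y)[u_y-u_{x'}]\bigr\}J(x'-y)
\end{align*}
and $k(u_x,u_y)[u_y-u_x]\bigl\{J(x-y)-J(x'-y)\bigr\}$. The first piece is controlled by an application of \eqref{est} (with $u$ playing the role of both functions evaluated at $x$ versus $x'$) by a constant times $|u_x-u_{x'}|$, which is uniformly small by uniform continuity of $u$ on $[0,T]\times\overline\Om$; integrating against $J(x'-y)\di y\di s$ gives a uniform bound $C\,\omega_u(|x-x'|)\cdot T\|J\|_{L^1}$, where $\omega_u$ is the modulus of continuity of $u$. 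The second piece, after the bound $|k(u_x,u_y)[u_y-u_x]|\le 2\eps\, K(2\eps,2\eps)$, integrates to a constant multiple of $T\|J(\cdot+(x-x'))-J\|_{L^1(\re^N)}$, which tends to zero as $|x-x'|\to 0$ by the standard $L^1$ translation-continuity lemma applied to $J\in L^1(\re^N)$.

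The main obstacle is the $L^1$-translation-continuity step: it is the only place where we genuinely use more than the $L^\infty$ structure of the problem, and it forces us to rely on $J\in L^1$ rather than any pointwise regularity. Crucially, this argument does not require $u$ to be continuous on the exterior $\re^N\setminus\overline\Om$, because the $u$-differences that appear are only $|u(s,x)-u(s,x')|$ with $x,x'\in\overline\Om$; the arguments $u(s,y)$ enter only through a uniform $L^\infty$ bound and through the kernel translate $J(\cdot-y)$. Once the preservation of $\widetilde B_{T,\eps}$ is established, the rest is bookkeeping and we conclude as in Theorem \ref{thm:wellp} and Corollary \ref{cor:back}.
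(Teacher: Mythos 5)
Your proposal is correct and follows the same route the paper has in mind: the paper simply observes that $C(\overline\Om)\cap L^\infty(\re^N)$ is another Banach space and asserts that the fixed-point argument of Theorem \ref{thm:wellp} carries over, so the substance of any proof must be exactly the invariance step you supply — showing that $\mc A_{u_0,T}$ preserves the subspace of time-continuous $C(\overline\Om)$-valued functions. Your decomposition of the integrand difference into a piece controlled by the modulus of continuity of $u$ on $\overline\Om$ (via the Lipschitz estimate \eqref{est}) and a piece controlled by $L^1$ translation continuity of $J$ (the same Lemma \ref{lem:adams} the paper invokes in the proof of Theorem \ref{thm:smp}) is the natural way to close the gap, and your remark that only $x,x'\in\overline\Om$ differences of $u$ are needed — so no continuity of $\psi$ on $\re^N\setminus\overline\Om$ is required — matches the paper's caveat about why the boundary condition must be prescribed on $\re^N\setminus\overline\Om$ rather than $\re^N\setminus\Om$.
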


One interesting fact is that $C([0,T])$ solutions to \eqref{ND} that solve \eqref{inteqn} automatically gain differentiability in time.
\begin{proposition}[Time differentiability]\label{prop:tdiff}
Let $u$ be a $C([0,T])\cap L^\infty(\re^N)$ solution to \eqref{ND} subject to \eqref{BC}.  Suppose that $k$ is $n$-times continuously differentiable, $n=0,1,2,3,...$.  Then, for any $m=1,2,3,...,n+1$, the time derivative $\partial_t^mu$ is in $C([0,T])\cap L^\infty(\re^N)$; i.e. $u\in C^{n+1}[0,T]\cap L^\infty(\re^N)$.
\end{proposition}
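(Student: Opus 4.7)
The plan is induction on $m$, showing that if $u \in C^m([0,T]) \cap L^\infty(\re^N)$ and $k \in C^{m-1}$, then $\partial_t^m u$ can be differentiated once more in time whenever $k$ admits another derivative. For shorthand, write $F(u)(t,x)$ for the right-hand side of \eqref{ND}, so that the integral equation \eqref{inteqn} gives $\partial_t u = F(u)$ in $\Omega$ and $\partial_t u = 0$ on $\re^N \setminus \Om$ (since the boundary data $\psi$ is time-independent).

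For the base case $m=1$: given only $u \in C([0,T]) \cap L^\infty(\re^N)$ and $k$ continuous, I would show $F(u) \in C([0,T]) \cap L^\infty(\re^N)$ directly. Boundedness follows because $J$ is integrable, $u$ is uniformly bounded, and $k$ is continuous (hence bounded) on the compact set $[-\|\|u\|\|, \|\|u\|\|]^2$. Continuity in $t$ (as a map into $L^\infty$) follows from continuity of $t \mapsto u(t,\cdot)$ in $L^\infty$, continuity of $k$ on the compact range, and the dominated convergence theorem with the fixed majorant $2\|\|u\|\| \, \sup k \cdot \|J\|_{L^1}$. Since $\partial_t u = F(u)$ (by differentiating \eqref{inteqn}), we conclude $\partial_t u \in C([0,T]) \cap L^\infty(\re^N)$.

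For the inductive step: assume the derivatives $\partial_t^j u \in C([0,T]) \cap L^\infty(\re^N)$ for $j = 0, 1, \ldots, m$, and assume $k \in C^m$ with $m \le n$. Differentiating $F(u)$ formally $m$ times in $t$ under the integral sign produces a finite sum of terms of the form
\[
\int_{\re^N} \bigl(\partial^{\alpha} k\bigr)\bigl(u(t,x), u(t,y)\bigr) \cdot P\bigl(\partial_t^{j} u(t,x), \partial_t^{j} u(t,y) : j \le m\bigr)\, J(x-y)\, dy,
\]
where $|\alpha| \le m$ and $P$ is a polynomial coming from the Fa\`a di Bruno formula applied to the composition $k(u(t,x), u(t,y))$ and the factor $u(t,y) - u(t,x)$. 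By the inductive hypothesis, every factor involving $u$ is in $C([0,T]) \cap L^\infty(\re^N)$, and the derivatives of $k$ up to order $m$ are continuous on the compact range of $u$. Repeating the $L^\infty$-continuity and dominated convergence argument from the base case, each such term lies in $C([0,T]) \cap L^\infty(\re^N)$, so $\partial_t^m F(u) \in C([0,T]) \cap L^\infty(\re^N)$. Integrating $\partial_t u = F(u)$ then yields $\partial_t^{m+1} u = \partial_t^m F(u)$ in the same space, completing the induction.

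The main obstacle is justifying that differentiation commutes with the integral at every stage; this is where the combination of the compact support of $J$, the uniform $L^\infty$-bounds on $u$ and all of its lower-order time derivatives (from the inductive hypothesis), and local boundedness of $k$ and its derivatives on the relevant compact set provide a fixed integrable majorant for an application of dominated convergence. All other steps reduce to bookkeeping with Fa\`a di Bruno on the composition $k(u(t,x),u(t,y))\cdot(u(t,y)-u(t,x))$.
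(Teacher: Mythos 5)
Your proposal is correct and takes essentially the same route as the paper: induction on $m$, differentiating the equation repeatedly in time, observing that each time differentiation of the right-hand side produces only derivatives of $k$ (up to order $m$) composed with $u$ and lower-order time derivatives of $u$, all of which are bounded and continuous in time by the inductive hypothesis, so that the $L^1$ kernel closes the estimate. You are more explicit than the paper about the combinatorial structure (Fa\`a di Bruno) and about justifying differentiation under the integral, which the paper leaves implicit. One small imprecision: for continuity of $t \mapsto F(u)(t,\cdot)$ as a map into $L^\infty(\re^N)$, what you actually need is a uniform-in-$x$ estimate of $|F(u)(t_1,x)-F(u)(t_2,x)|$ using uniform continuity of $k$ on the compact set $[-\|\|u\|\|,\|\|u\|\|]^2$ together with $\|J\|_{L^1}$, not the dominated convergence theorem; DCT gives pointwise limits of integrals, not sup-norm control. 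The same elementary estimate supplies the integrable majorant for the interchange of derivative and integral, so the conclusion is unaffected.
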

\begin{proof}
Since $\partial_t^nu|_{\re^N\setminus\Om}=0$ for $n=1,2,3,...$ (cf. \eqref{BC}), it is clear that $u$ is in $C^\infty([0,T])\cap L^\infty(\re^N\setminus\Om)$.  First, consider $m=1$.  Since $u$ is in $C([0,T])$, it follows from the right hand side of \eqref{ND} that $\partial_t^1u$ is also in $C([0,T])$.  It is in $L^\infty(\Omega)$, since
\begin{align*}
\|u_t\|_{L^\infty(\Omega)}\le\|k(u(t,x),u(t,y)[u(t,y)-u(t,x)]\|_{L^\infty(\Omega\times\re^N)}\|J\|_{L^1(\re^N)},
\end{align*}
and we assumed $k$ to be continuous.

Assuming that the first $m$ time derivatives of $u$ only have dependence on $J$, continuously differentiable functions of $u$, and integrals thereof, we differentiate \eqref{ND} $m$ times and find that the right hand side depends only on $u,\partial_tu,...,\partial_t^{m-1}u,$ and $\partial_t^mu$, which, in turn, are in $C[0,T]\cap L^\infty(\Om)$ by hypothesis.  The result follows from induction up to $m=n$.
\end{proof}
\begin{corollary}
If $k$ is infinitely differentiable, then $u\in C^\infty([0,T],L^\infty(\re^N))$. In particular, this is the case for the linear equation \eqref{Dlin}, and for \eqref{ND} if $k(u,v)=|u\pm v|^p,p=2,4,6,8,...$.
\end{corollary}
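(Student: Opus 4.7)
The plan is to read this corollary as the natural limit of Proposition \ref{prop:tdiff}. Assume $k \in C^\infty(\re^2)$. Then $k$ is $n$-times continuously differentiable for every nonnegative integer $n$, so Proposition \ref{prop:tdiff} applies for each such $n$ and gives $\partial_t^m u \in C([0,T]) \cap L^\infty(\re^N)$ for every $m = 1,2,\ldots,n+1$. Since $n$ is arbitrary, the conclusion holds for every positive integer $m$, which is exactly the statement $u \in C^\infty([0,T], L^\infty(\re^N))$. No new induction is needed; the induction inside the proof of Proposition \ref{prop:tdiff} already does the work, so this step is essentially a quantifier swap.

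For the two stated special cases, I would just verify that the conductivities lie in $C^\infty(\re^2)$. In the linear case \eqref{Dlin}, the equation \eqref{ND} corresponds to $k(u,v)\equiv 1$, a constant function, which is trivially smooth. In the nonlocal $p$-Laplacian-type examples, $k(u,v) = |u \pm v|^p$ with $p$ a positive even integer, we have $|u\pm v|^p = (u\pm v)^p$ because $p$ is even; this is a polynomial in the two real variables $u,v$ and hence belongs to $C^\infty(\re^2)$. In both cases, the first part of the corollary then delivers $u \in C^\infty([0,T], L^\infty(\re^N))$.

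There is no real obstacle here beyond making sure the hypothesis of Proposition \ref{prop:tdiff} is genuinely met in each example. The one subtlety worth flagging is that $|u\pm v|^p$ is \emph{not} infinitely differentiable in $(u,v)$ when $p$ is odd, since then $|u\pm v|^p = \operatorname{sgn}(u\pm v)(u\pm v)^p$ has only finite regularity on the set $\{u = \mp v\}$; this is why the statement restricts to even $p$. Thus the corollary is essentially a direct consequence of the proposition, together with the elementary smoothness check on the conductivity.
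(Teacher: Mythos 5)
Your proof is correct and matches the (implicit, since the paper omits it) argument: the corollary is an immediate quantifier swap applied to Proposition \ref{prop:tdiff}, plus the elementary observation that $k\equiv 1$ and $k(u,v)=|u\pm v|^p=(u\pm v)^p$ (for even $p$) are smooth. Your parenthetical note on why odd $p$ fails is a nice touch and consistent with the paper's restriction.
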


\begin{rem}
This type of result and proof is known for ODEs, such as $u_t=F(u)$, but does not follow so immediately in the classical setting for parabolic PDEs, such as $u_t=\Delta u$, since the right hand side, in that case, would also depend on the spatial derivatives of $u$ (e.g. $u_{tt}=\Delta^2 u$), which would not necessarily be time continuous.
\end{rem}

We need the following related technical lemma for Theorem \ref{thm:smp}.
\begin{lemma}\label{lem:lip}
Let $k$ satisfy \eqref{Lip}.  Then $u_t$ is uniformly Lipschitz continuous in time.
\end{lemma}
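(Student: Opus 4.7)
The plan is in two steps. First, use that $u$ is bounded in $L^\infty$ and $k$ is continuous to show $u_t$ is uniformly bounded; this immediately makes $u$ itself Lipschitz in time. Second, reuse the pointwise algebraic identity underlying \eqref{est} to control the difference $u_t(t,x)-u_t(s,x)$ by $|u(t,x)-u(s,x)| + |u(t,y)-u(s,y)|$, which by the first step is $O(|t-s|)$.

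Concretely, set $M = \|\|u\|\|$ and $k_M = \sup\{k(a,b) : |a|,|b|\le M\}$, which is finite since $k$ is continuous by \eqref{Lip}. Reading \eqref{ND} directly,
$$|u_t(t,x)| \le \int_{\re^N} k\bigl(u(t,x),u(t,y)\bigr)\,|u(t,y)-u(t,x)|\,J(x-y)\,dy \le 2Mk_M\|J\|_{L^1} =: L_0,$$
uniformly in $(t,x)$. Integrating in time gives $|u(t,x)-u(s,x)| \le L_0|t-s|$ for every $x\in\re^N$.

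For the second step, for fixed $s,t\in[0,T]$ and $x\in\re^N$, subtracting the integral formulas for $u_t(t,x)$ and $u_t(s,x)$ yields
$$u_t(t,x) - u_t(s,x) = \int_{\re^N} \bigl[k_t(u^y_t - u^x_t) - k_s(u^y_s - u^x_s)\bigr]\,J(x-y)\,dy,$$
with the shorthand $k_\tau = k(u(\tau,x),u(\tau,y))$ and $u^z_\tau = u(\tau,z)$. The integrand is precisely the algebraic expression bounded in \eqref{est}, applied with the two ``functions'' there taken to be $u(t,\cdot)$ and $u(s,\cdot)$. Expanding via the same telescoping used in \eqref{est} and using the pointwise bounds $|k_\tau| \le k_M$ together with $|k_t - k_s| \le K(2M,2M)(|u(t,x)-u(s,x)| + |u(t,y)-u(s,y)|)$, one obtains an $M$-dependent constant $C_M$ for which
$$|k_t(u^y_t-u^x_t) - k_s(u^y_s-u^x_s)| \le C_M \bigl(|u(t,x)-u(s,x)| + |u(t,y)-u(s,y)|\bigr).$$
Invoking Step 1 to bound each time-difference by $L_0|t-s|$ and integrating against $J$ then yields
$$|u_t(t,x) - u_t(s,x)| \le 2 C_M L_0 \|J\|_{L^1}\,|t-s|$$
uniformly in $x$, which is the desired bound.

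I do not expect any genuine obstacle. The only bookkeeping point is to extract the \emph{pointwise} version of \eqref{est}: in the proof of Theorem \ref{thm:wellp} the triple-bar norms appear only on the final line, where the authors replaced $|u^x|+|u^y|$ and $|v^x|+|v^y|$ by their sup-norm majorants; here we do the analogous substitution using the uniform bound $M$, keeping the spatial differences $|u(t,\cdot)-u(s,\cdot)|$ pointwise so that Step 1 can convert them into a multiple of $|t-s|$.
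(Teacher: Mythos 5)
Your argument is correct and follows essentially the same route as the paper: subtract \eqref{ND} at two times, use the Lipschitz property \eqref{Lip} exactly as in \eqref{est} to bound the integrand difference by a multiple of $|u(t,\cdot)-u(s,\cdot)|$, then control that time-difference by $|t-s|$ via a uniform bound on $u_t$. The paper obtains the $u_t$ bound by citing Proposition~\ref{prop:tdiff} rather than re-deriving it from the integral formula, and it works with the sup-in-space form of \eqref{est} directly; the ``pointwise vs.\ norm'' bookkeeping you flag at the end is harmless but unnecessary, since your Step~1 bound on $|u(t,x)-u(s,x)|$ is already uniform in $x$ and hence immediately gives the sup-norm version.
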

\begin{proof}
Choose $t_1$ and $t_2$ in $[0,T]$.  Evaluating \eqref{ND} at these times and subtracting gives:
\begin{align}\label{utlip}
u_t(t_2,x)-u_t(t_1,x)=\int_{\re^N}\bigl[k_2(u_{y}^{2}-u_{x}^{2})-k_1(u_{y}^{1}-u_{x}^{1})\bigr]J(x-y)\di y,
\end{align}
where we have put $k_i=k(u(t_i,x),u(t_i,y))$, and $u_{z}^i=u(t_i,z)$, for brevity.

Proceeding as in \eqref{est}, we have the following estimate for the integrand:
\begin{align*}
\bigl|k_2(u_{y}^2&-u_{x}^2)-k_1(u_{y}^1-u_{x}^1)\bigr|\\
\le &4(\|u^1\|+\|u^2\|)K(\|u^1\|+\|u^2\|,\|u^1\|+\|u^2\|)\,\|u^2-u^1\|.
\end{align*}
By \eqref{cont}, the $L^\infty$ norm of $u$ can be bounded in $[0,T]$, so this becomes:
\begin{align*}
\bigl|k_2(u_{y}^2&-u_{x}^2)-k_1(u_{y}^1-u_{x}^1)\bigr|\le C_1\|u^2-u^1\|.
\end{align*}
By Proposition \ref{prop:tdiff}, $u_t$ is in $C[0,T]\cap L^\infty(\re^N)$.  This means that $u$ is a uniformly Lipschitz continuous function of time:
\begin{align*}
\bigl|k_2(u_{y}^2&-u_{x}^2)-k_1(u_{y}^1-u_{x}^1)\bigr|\le C_2|t_2-t_1|.
\end{align*}
Substitution into \eqref{utlip} gives the desired Lipschitz continuity:
\begin{align*}
\|u_t(t_2,x)-u_t(t_1,x)\|_{L^\infty(\re^N)}\le C_2\|J\|_{L^1(\re^N)}|t_2-t_1|.
\end{align*}
\end{proof}

\section{Trivial solutions}
\label{sec:triv}
When working with continuous solutions to differential equations, the usual notion of a trivial solution is the constant function.  For example, the porous medium equation
\begin{align}\label{pme}
u_t=\Delta(|u|^{m-1}u)=\nabla\cdot(m|u|^{m-1}\nabla u),
\end{align}
admits the trivial solution $u\equiv $ constant.  This is the solution obtained when applying the strong maximum principle to solutions with interior extrema.

For nonlocal equations like \eqref{ND} without spatial differential operators such as $\Delta$ or $\nabla$, this notion of triviality must be modified.  There may be nonconstant trivial solutions.

\begin{defn}[Trivial solutions]\label{def:triv}
We say that $u:\re^N\to\re$ is a trivial solution to \eqref{ND} if it satisfies the following functional equation:
\begin{align}\label{triv}
k(u(x),u(y))[u(y)-u(x)]J(x-y)=0,\quad x\in\Om,\quad y\in\re^N.
\end{align}
\end{defn}
\begin{rem}
Condition \eqref{triv} means that the integrand in \eqref{ND} is identically zero for all time.  Note that the compact support of $J$ means that $u=\psi(y)$ can be left unspecified for those $y$ satisfying $\dist(y,\Om)>1$.  Since they do not influence equation \eqref{ND} in any way, we may define $\psi$ as an equivalence class modulo these values.
\end{rem}
\begin{rem}
Example: porous medium-type equations.  If $k(u,v)=|u+v|^{m-1}$, then a trivial solution can be presented a.e. in the following form:
\begin{align}\label{trivex}
u(x)=U\alpha(x),\quad x\in\re^N,
\end{align}
where $\alpha$ is any function mapping $\re^N$ into $\{-1,1\}$, and $U$ is a constant.  

Interestingly, $\alpha$ may be chosen so that $u$ has infinitely many discontinuities.  For example, if $N=1$, the function
\begin{align}\label{usin}
\al(x)=\begin{cases}
1,&x=0,\pm 1/\pi,\pm 1/(2\pi),...\\
\sgn(\sin(1/x)),& \text{otherwise}
\end{cases}
\end{align}
has an infinite number of discontinuities in any neighborhood of $x=0$.  

In general, functions defined by \eqref{trivex} are not even weak solutions to the classical porous medium equation \eqref{pme}.  A weak solution $u$ to \eqref{pme} solves the following integral equation
\begin{align}\label{weakpme}
\int_{0}^T\int_{\re^N}\left[u\,\partial_t\varphi+|u|^{m-1}u\Delta\varphi\right]\di x\di t=0
\end{align}
for every infinitely differentiable $\varphi$ with compact support in $(0,T)\cap\re^N$.  If, for example, we take $N=1$, $u$ to be that in \eqref{trivex}, and $\alpha(x)=\sgn(x)$, we find that the first term vanishes by the time-constancy of $u$, while, for the second term:
\begin{align*}
\int_0^T\int_{\re}\alpha(x)\,\partial_x^2\varphi(t,x)\di x\di t&=\int_0^T\left[\int_0^\infty\partial_x^2\varphi(t,x)\di x-\int_{-\infty}^0\partial_x^2\varphi(t,x)\di x\right]\di t\\
&=-2\int_0^T\partial_x\varphi(t,0)\di t,
\end{align*}
which, clearly, is not always zero.  For this choice of nonconstant $\alpha$, \eqref{trivex} yields a weak solution only for $U=0$.

This example illustrates that nonlocal nonlinearities do not always give similar results to classical (local) ones.  
It also demonstrates that solutions to \eqref{ND} do not always converge to those of \eqref{pme} as, roughly, the support of $J$ vanishes, which \emph{does} hold in the linear \eqref{Dlin} or $p$-Laplacian \eqref{pLap} cases \cite{Rossi}.  

This example may be contrasted with a less interesting one:
\begin{align*}
\al(x)=\begin{cases}
1,&x\in\mathbb{\re}\setminus\mathbb{Q}\\
-1,& x\in\mathbb{Q},
\end{cases}
\end{align*}
which is equivalent, almost everywhere, to $\alpha(x)\equiv 1$, and, hence, \emph{does} give a weak solution to \eqref{pme}.
\end{rem}
\begin{rem}
To our knowledge, nonconstant trivial solutions such as \eqref{trivex} have not been presented in the literature before.  This is presumably due to the nonlinearities having been considered.  For example, the $p$-Laplacian equation \eqref{pLap} considered in \cite{Rossi} only has one trivial solution: the constant function.  It should also be noted that another porous medium-type equation, given by $k(u,v)=|u|^{m-1}+|v|^{m-1}$, admits only constant trivial solutions.
\end{rem}
\begin{rem}\label{rem:trivGen}
The general solution to \eqref{triv} may be described as follows.  For each $u\in\re$, let $F(u)\subset\re$ be the set of those $v$ such that $k(u,v)=0$.  If $k(u(x),u(y))\bigl[u(y)-u(x)\bigr]=0$ for any $y\neq x$, then either $u(y)\in F(u(x))$ or $u(y)=u(x)$.  Let $U=u(z)$ for some $z\in\re^N$.  Then, for each $y\neq z$, we have $u(y)\in F(U)$ or $u(y)=U$.  Thus, the image of $u$ is $\{U\}\cup F(U)$.

It is clear that $u(x)\equiv U$ is a solution, so assume that there exists some $V\neq U$ such that $u(x)=V$ on a set of positive measure.  Then $V\in F(U)$.  We have $k(U,V)=0$, clearly, but in order for $k(V,U)=0$, we need for $U\in F(V)$.  Provided this is satisfied, a nonconstant solution to \eqref{triv} exists.  We may write it as follows:
\begin{align}
u(x)=U\alpha(x)+\bigl[1-\alpha(x)\bigr]V,
\end{align}
where $\alpha$ is any function from $\re^N$ onto $\{0,1\}$.  Either $u(x)=U$, or $u(x)=V$.

A necessary and sufficient condition for nonconstant solutions to exist is that $U\in F(V)$, provided that $V\in F(U)\setminus\{U\}$.  A necessary condition for this to be true that $U\in F(F(U))$; a common sufficient condition is that $F(V)=F(F(U))$ (provided that $U\in F(F(U))$).

Let us now consider a case for which the essential image of $u$ has more than two elements.  Suppose that $u(x)=W$ on a set of positive measure, where $W\in F(U)\setminus\{U,V\}$.  Clearly, we need for $U\in F(W)$, as with $V$.  However, in order for $k(V,W)=k(W,V)=0$, we need for $V\in F(W)$, and $W\in F(V)$.  Since $V,W\in F(U)$, we see that a necessary condition is that $V,W\in F(F(U))$.  Therefore, the set $F(F(U))\setminus\{U\}$ must contain at least two elements for such solutions $u$ to exist.

For example, if $k(u,v)=|u+v|^{m-1}$, then $F(U)=\{-U\}$.  Let $V=-U$ be in $F(U)$.  Then $F(V)=F(-U)=\{U\}\ni U$, so a non-constant solution exists, namely, \eqref{trivex}.  In fact, since $F(F(U))\setminus\{U\}=\varnothing$, there are no solutions $u$ that attain three or more values, so \eqref{trivex} is the general solution.

A similar example to the previous is if $k(u,v)=|a+uv|^{m}$ for some $a\neq 0$, $m>0$.  In this and the previous case, $F(U)$ defines an involutive function $V:\{V(U)\}=F(U)$, such that $V(V(U))=U$ (since $F(F(U))=\{U\}$), where defined.  Indeed, $F(U)=\{-a/U\}$ for each $U\neq 0$, so $F(F(U))=\{U\}$.  Again, nonconstant solutions exist.  In this case, the nonconstant solution to \eqref{triv} can be presented as:
\begin{align*}
u(x)=U^{-2\alpha(x)+1}(-a)^{\alpha(x)},
\end{align*}
where $\alpha$ is any function from $\re^N$ into $\{0,1\}$, and $U\neq 0$.

A more complicated example is for $k(u,v)=\sin^2\bigl[\pi(u+v)^2\bigr]$.  We have $F(U)=\{-U+n^{1/2}\}_{n=-\infty}^\infty$, where $n^{1/2}:=\sgn(n)\sqrt{|n|}$.  For any $V=-U+m^{1/2}\in F(U)$, we have that $F(V)=\{U-m^{1/2}+n^{1/2}\}_{n=0}^\infty$, so $U\in F(V)$ (take $n=m$).  This means that nonconstant solutions exist for every $U\in\re$.  Two-valued solutions to \eqref{triv} are as follows:
\begin{align*}
u(x)=U\al(x)+\bigl[1-\al(x)\bigr]n^{1/2}/2,
\end{align*}
where $\alpha:\re^N\to\{-1,1\}$, $U$ is an arbitrary constant, and $n$ is an arbitrary integer.

To see if there exists solutions $u$ with images containing more than two elements, we consider $W=-U+\ell^{1/2}\in F(U)$.  For given $U$ and $V$, the set $F(V)$ contains $W$ if $U-m^{1/2}+n^{1/2}=-U+\ell^{1/2}$ has a solution $n$.  In fact, this is the same equation that determines if the set $F(W)$ contains $V$.  This equation has solutions only for countably many values of $U$.  For instance, if $U=\pi$, then there are no solutions.  As an example, suppose that $U=0$, and that $\ell$ and $m$ take values in the signed square integers, $\{p|p|\}_{p=-\infty}^\infty$.  Then this equation admits an integer solution $n$ for every such $\ell,m$.  Therefore, we find the following solution to \eqref{triv}
\begin{align*}
u(x)=\nu(x),
\end{align*}
where $\nu$ is an arbitrary function from $\re^N$ into the integers (i.e $\nu(x)=\ell^{1/2},\ell=0,\pm 1,\pm 4,\pm 9,...$).  
\end{rem}

\begin{rem}\label{rem:semitriv}
In the proof of the Strong Maximum Principle, the following functional equation appears:
\begin{align}\label{semitriv}
k(U,u(y))\bigl[u(y)-U\bigr]J(\chi-y)=0,\quad y\in\re^N,
\end{align}
where $U$ and $\chi\in\overline\Om$ are constants.  This is similar to \eqref{triv}, and, for the proof, we need to know under which conditions solutions to \eqref{semitriv} necessarily satisfy \eqref{triv}.  

For a given $y$, either $u(y)=U$, or $u(y)\in F(U)\setminus\{U\}$.  In order for \\$k(u(x),u(y))\bigl[u(y)-u(x)\bigr]=0$, we need for either $u(y)=u(x),$ or $u(y)\in F(u(x))$.  If $u(x)=U$, then we need for either $u(y)=U$ or $u(y)\in F(U)$.  Both such conditions are satisfied as a result of $u$ solving \eqref{semitriv}.  On the other hand, if $u(x)$ is in $F(U)\setminus\{U\}$, then we need for either $u(y)=u(x)$, or for $u(y)\in F(u(x))\subset F(F(U))$.  If the first condition is not satisfied, then we need for  $u(y)\in F(u(x))\setminus u(x)$, given that $u(y)\in \{U\}\cup F(U)\setminus u(x)$.  

One important case is when $F(U)=\{u(x)\}$, and $F$ acts like an involutive function: $F(F(U))=\{U\}$, given that $u(x)\neq U$, since this implies that $u(y)=U\in F(u(x))=F(F(U))$.  As in Remark \ref{rem:trivGen}, examples for this case are $k(u,v)=|u+v|^{m-1}$ (more generally, those that satisfy \eqref{k3} for the minus sign) as well as $k(u,v)=|1+uv|^m$.
\end{rem}

Thus, for the simple class of coefficients $k$ in \eqref{k3}, we have the following characterization.
\begin{lemma}\label{lem:semitriv}
Suppose that $u$ solves \eqref{semitriv}.  If $k$ satisfies \eqref{k3}, then $u$ is also a trivial solution to \eqref{ND} that solves \eqref{triv}.
\end{lemma}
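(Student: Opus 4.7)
The strategy is to combine \eqref{semitriv} with the restriction \eqref{k3} to show that the essential image of $u$ on the relevant region consists of at most the two values $\{U,-U\}$, after which \eqref{triv} reduces to a case analysis on the pair $(u(x),u(y))$. First I would read off from \eqref{semitriv} that, for each $y$ with $J(\chi-y)>0$, either $u(y)=U$ or $k(U,u(y))=0$; by \eqref{k3} the latter forces $u(y)=\pm U$. Hence $u(y)\in\{U,-U\}$ for essentially every such $y$, which is exactly the conclusion of the preliminary case analysis in Remark \ref{rem:semitriv}.

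To verify \eqref{triv} at a fixed $(x,y)\in\Om\times\re^N$, the statement is trivial whenever $J(x-y)=0$, so I focus on $J(x-y)>0$. In the consistent regime $u(x),u(y)\in\{U,-U\}$ highlighted in Remark \ref{rem:semitriv}, there are two subcases. If $u(x)=u(y)$, the difference factor $u(y)-u(x)$ is zero and we are done. Otherwise $\{u(x),u(y)\}=\{U,-U\}$ with $U\ne 0$. Substituting a point $y_0$ with $u(y_0)=-U$ and $J(\chi-y_0)>0$ (which must exist for this subcase to arise) back into \eqref{semitriv} yields $k(U,-U)\cdot(-2U)=0$, hence $k(U,-U)=0$. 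Combining this with the involutive structure of the multivalued map $F$ on the orbit $\{U,-U\}$ that \eqref{k3} supplies (again via Remark \ref{rem:semitriv}) delivers the transposed relation $k(-U,U)=0$ as well. Therefore $k(u(x),u(y))\cdot[u(y)-u(x)]=0$ in either orientation, which is exactly \eqref{triv}.

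The main obstacle is the off-diagonal orientation $u(x)=-U$, $u(y)=U$: one direct application of \eqref{semitriv} gives only $k(U,-U)=0$, not the transposed $k(-U,U)=0$. Resolving this requires the involutive pairing $U\leftrightarrow -U$ of the set-valued map $F$, i.e.\ $U\in F(-U)$ whenever $-U\in F(U)$; this is the structural content of \eqref{k3} singled out in Remark \ref{rem:semitriv}, and for the concrete examples driving the paper, notably $k(u,v)=|u+v|^{m-1}$ and $k(u,v)=|u-v|^{p-2}$, it follows automatically from the symmetry of $k$ in its two arguments. Once this symmetry is in hand, the two subcases above exhaust all possibilities and \eqref{triv} follows.
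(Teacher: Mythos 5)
Your argument reconstructs the implicit proof the paper delegates to Remark \ref{rem:semitriv}: \eqref{semitriv} together with \eqref{k3} confines $u(y)$ to $\{U,-U\}$ on the set where $J(\chi-\cdot)>0$, and \eqref{triv} then reduces to checking that $k$ vanishes on the off-diagonal pair $\{U,-U\}$ in both orientations.  You have put your finger on the genuine technical crux: \eqref{semitriv} always keeps $U$ in the first argument of $k$, so it delivers $k(U,-U)=0$ but says nothing about $k(-U,U)$; and the one-directional implication \eqref{k3}, which is just $F(u)\subset\{u,-u\}$, does not by itself propagate $-U\in F(U)$ to $U\in F(-U)$.

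The paper quietly closes this by specializing, in the last paragraph of Remark \ref{rem:semitriv}, to conductivities for which $F$ is involutive ($F(F(U))=\{U\}$); that is an extra structural hypothesis beyond \eqref{k3} as literally written, satisfied by the paper's examples $|u+v|^{m-1}$, $|u-v|^{p-2}$ and $|1+uv|^{m}$ because they are symmetric in $(u,v)$, but not a formal consequence of \eqref{k3} alone.  So the one slip in your write-up is the phrase that the involutive pairing is ``the structural content of \eqref{k3}'' --- as you yourself note two sentences later, it really comes from the symmetry of the concrete conductivities, not from \eqref{k3}.  With that caveat, your proof is correct, it takes the same route as the paper, and your explicit flagging of what is needed beyond \eqref{k3} is a useful sharpening of a hypothesis the lemma silently assumes.
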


Finally, trivial solutions satisfy the following obvious property that is important for proving the Strong Maximum Principle.
\begin{lemma}\label{lem:triv}
Let $u$ be a solution to \eqref{ND}.  If, for some $t=t_0$, $u(t_0,x)$ is a trivial solution to \eqref{ND} that solves \eqref{triv}, then $u$ is identically trivial.
\end{lemma}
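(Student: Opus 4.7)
The plan is to exploit uniqueness of the initial value problem at time $t_0$: the constant-in-time extension of the trivial datum $u(t_0,\cdot)$ is itself a solution of \eqref{ND}-\eqref{BC}, and therefore must coincide with $u$ on a neighborhood of $t_0$; iteration will then propagate triviality to the full interval on which $u$ is defined.

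First, I would define $v(t,x) := u(t_0,x)$ for every $(t,x)\in[0,T]\times\re^N$. Then $v_t\equiv 0$ trivially, and because $u(t_0,\cdot)$ solves \eqref{triv} by hypothesis, the integrand on the right-hand side of \eqref{ND} vanishes pointwise for every $x\in\Omega$ and every $y\in\re^N$. Hence $v$ satisfies \eqref{ND}. The boundary condition \eqref{BC} also holds, since for $x\in\re^N\setminus\Omega$ we have $v(t,x)=u(t_0,x)=\psi(x)$. Thus both $u$ and $v$ are $B_T$ solutions of the initial-boundary value problem \eqref{ND}-\eqref{BC} associated to the common initial datum $u(t_0,\cdot)$ prescribed at time $t_0$.

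Next, I would apply the uniqueness part of Theorem \ref{thm:wellp}, shifted to start at $t=t_0$, together with Corollary \ref{cor:back} for the backward direction. This yields some $T'>0$ (given by \eqref{T} with $\epsilon=2\|u(t_0,\cdot)\|$) such that $u\equiv v$ on $[t_0-T',t_0+T']\cap[0,T]$. In particular $u(t,\cdot)=u(t_0,\cdot)$ on this interval, which is trivial in the sense of Definition \ref{def:triv}.

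Finally, I would iterate. The key observation is that the local existence time \eqref{T} depends only on $\|u(t_0,\cdot)\|$, $J$, and $k$, and because $v$ is time-constant, the value of $\|u(t,\cdot)\|=\|u(t_0,\cdot)\|$ is preserved along every iterate. Restarting the above argument at the endpoints $t_0\pm T'$ therefore gives a uniform step size, and finitely many iterations cover $[0,T]$. The only point requiring care — and what I would expect to be the main (mild) obstacle — is ensuring the continuation does not shrink: this is guaranteed precisely because the candidate solution $v$ is constant in time, so the quantity controlling $T'$ in \eqref{T} never grows as we step from $t_0\pm T'$ to $t_0\pm 2T'$ and so on.
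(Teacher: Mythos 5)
Your argument matches the paper's proof: define $v(t,x)=u(t_0,x)$, observe it solves \eqref{ND}--\eqref{BC}, and invoke forward and backward uniqueness (Theorem \ref{thm:wellp} and Corollary \ref{cor:back}) to force $u\equiv v$. The paper states this tersely; you have correctly added the iteration step with uniform step size (since $\|v(t,\cdot)\|$ is time-constant) that the paper leaves implicit.
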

\begin{proof}
The function $v(t,x)=u(t_0,x)$ is a trivial solution to \eqref{ND} for all times $t$.  By forwards (Theorem \ref{thm:wellp}) and backwards (Corollary \ref{cor:back}) uniqueness of solutions, we can only conclude that $u\equiv v$ is trivial.
\end{proof}

\section{Strong Maximum Principle}\label{sec:smp}
We prove the Strong Maximum Principle for nonlocal equations \eqref{ND} with nonlinearity discussed in Remark \ref{rem:semitriv}.  In particular, this result holds for porous medium-type coefficients $k(u,v)=|u+v|^{m-1}$, $m\ge 2$ and $p$-Laplacian coefficients $k(u,v)=|u-v|^{p-2}$, $p\ge 3$.
 
We first consider continuous functions $u\in C[0,T]\cap C(\overline\Om)\cap L^\infty(\re^N)$.  The proof is more technical in the more general $L^\infty$ case, so this short proof is a useful illustration.
\begin{theorem}[Strong maximum principle]
Let $u$ be a $C([0,T),C(\overline\Om),L^\infty(\re^N))$ solution to (\ref{ND}).  Suppose that $k$ satisfies \eqref{k3}.  If $u$ attains a global extremum in $(0,T]\times\overline\Omega$, then it must be a trivial solution.
\end{theorem}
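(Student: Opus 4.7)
The plan is to first establish the semitrivial functional equation \eqref{semitriv} at the extremum point via the PDE, then use hypothesis \eqref{k3} together with the continuity of $u(t_0, \cdot)$ on $\overline\Omega$ to upgrade this to the trivial equation \eqref{triv} pointwise on $\Omega$, and finally invoke Lemma \ref{lem:triv} to extend triviality to all times.

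Without loss of generality assume the extremum is a global maximum of value $M$: if it is a minimum, replace $u$ with $-u$, which solves \eqref{ND} with $k(u,v)$ replaced by $\tilde k(u,v) = k(-u,-v)$, and $\tilde k$ still satisfies \eqref{k3}. Let $(t_0, x_0) \in (0, T] \times \overline\Omega$ be the maximum point; for now assume $x_0 \in \Omega$ (the case $x_0 \in \partial\Omega$ would be handled by a limit argument exploiting $u(t_0, \cdot) \in C(\overline\Omega)$). Time-maximality at $t_0$, combined with continuity of $u_t$ furnished by Proposition \ref{prop:tdiff}, forces $u_t(t_0, x_0) \geq 0$. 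Evaluating \eqref{ND} yields
\begin{align*}
0 \leq u_t(t_0, x_0) = \int_{\re^N} k\bigl(M, u(t_0, y)\bigr)\bigl[u(t_0, y) - M\bigr] J(x_0 - y)\, dy,
\end{align*}
and since $u(t_0, y) \leq M$ everywhere while $k, J \geq 0$, the integrand is non-positive. It must therefore vanish almost everywhere, giving the semitrivial equation \eqref{semitriv} at $\chi = x_0$, $U = M$. Hypothesis \eqref{k3} then forces $u(t_0, y) \in \{M, -M\}$ for a.e. $y$ with $|y - x_0| < 1$.

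The key propagation step uses continuity of $u(t_0, \cdot)$ on $\overline\Omega$ to promote this a.e. dichotomy to a pointwise one on $B(x_0, 1) \cap \Omega$; by connectedness of this set and the known value $u(t_0, x_0) = M$, we conclude $u(t_0, \cdot) \equiv M$ on $B(x_0, 1) \cap \Omega$. Iterating the argument at each new interior point where the value $M$ is taken, and using connectedness of $\Omega$ (or working component by component), a chain of unit balls covers all of $\Omega$, giving $u(t_0, \cdot) \equiv M$ throughout $\Omega$. Re-applying the semitrivial derivation at each $x \in \Omega$ with $U = M = u(t_0, x)$ then yields \eqref{triv} pointwise, i.e., $u(t_0, \cdot)$ is a trivial solution. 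By Lemma \ref{lem:triv}, $u$ is identically trivial.

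The main obstacle is the propagation step, which relies on a delicate interplay of three ingredients: hypothesis \eqref{k3} to confine nearby values to $\{M, -M\}$; spatial continuity to promote a.e. inclusions to pointwise ones; and connectedness of $\Omega$ to extend from a single unit ball to the whole domain. A secondary technicality is the boundary case $x_0 \in \partial\Omega$, where \eqref{ND} does not apply directly at $x_0$ and one must pass to interior limits using continuity on $\overline\Omega$.
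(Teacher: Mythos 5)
Your proposal is correct in spirit but takes a genuinely different route from the paper. The paper's proof is shorter: after obtaining $u_t(t_0,x_0)=0$ and the sign argument forcing the integrand to vanish, it invokes Lemma~\ref{lem:semitriv} directly to pass from the semitrivial functional equation~\eqref{semitriv} to the trivial equation~\eqref{triv}, and then applies Lemma~\ref{lem:triv}; it never uses spatial continuity of $u(t_0,\cdot)$. You instead exploit the $C(\overline\Om)$ structure: you turn the a.e.\ dichotomy $u(t_0,y)\in\{M,-M\}$ into a pointwise one via continuity and then propagate the value $M$ across $\Om$ by a chain of unit balls, arriving at the stronger conclusion $u(t_0,\cdot)\equiv M$ on $\Om$. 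Your approach is more elementary and more explicit about the mechanism by which the semitrivial condition at a single point controls values across the whole domain (a point the paper's Lemma~\ref{lem:semitriv} glosses over, since~\eqref{semitriv} only constrains $u$ on $B(\chi,1)$); what the paper's approach buys is uniformity with the later $L^\infty$ version (Theorem~\ref{thm:smp}), where your continuity-based propagation is unavailable. Two caveats on your argument: (i) the propagation step genuinely requires $\Om$ to be connected, and the parenthetical ``working component by component'' does not close the gap, since the components that do not contain the extremum need not inherit the value $M$; (ii) the re-application of the semitrivial derivation at each $x\in\Om$ (to cover contributions from $y\notin\Om$) should be stated explicitly as using $u(t_0,x)=M$ being itself a global max in time, so that $u_t(t_0,x)\ge 0$ there too. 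Your handling of $u_t(t_0,x_0)\ge 0$ rather than $=0$ is actually more careful than the paper's, which silently restricts to $t_0\in(0,T)$ despite the theorem allowing $t_0=T$.
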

\begin{proof}
Let $(t_0,x_0)\in(0,T)\times\overline\Omega$ be such that $|u(t_0,x_0)|\ge |u(t,x)|$ for almost all $(t,x)\in[0,T]\times\re^N$.  By virtue of $u(t_0,x_0)$ being an extremum, we see that $u_t(t_0,x_0)=0$.  Substituting this and $(t,x)=(t_0,x_0)$ into \eqref{ND} gives:
\begin{align*}
0=\int_{\re^N}k\bigl(u(t_0,x_0),u(t_0,y)\bigr)\bigl[u(t_0,y)-u(t_0,x_0)\bigr]J(x_0-y)\di y.
\end{align*}
Since $u(t_0,x_0)$ is a global maximum or minimum, the integrand is one-signed, so we deduce that it is identically zero.  By Lemma \ref{lem:semitriv}, $u(t_0,y)$ is a trivial solution, which, from Lemma \ref{lem:triv}, implies that $u$ is identically trivial.
\end{proof}

We now consider the case of $u$ being in $C[0,T]\cap L^\infty(\re^N)$.  For future reference, we set: 
\begin{align*}
u^+(t)&=\|u(t,x)\|_{L^\infty(\re^N)}=\text{ess}\sup_{\re^N}|u(t,x)|,\\
u_-(t)&=\text{ess}\inf_{\re^N}u(t,x),\\
u_+(t)&=\text{ess}\sup_{\re^N}u(t,x),\\
U_-(t)&=\text{ess}\inf_{\Omega}u(t,x),\\
U_+(t)&=\text{ess}\sup_{\Omega}u(t,x),\\
\psi_-&=\text{ess}\inf_{\re^N\setminus\Om}\psi(x),\\
\psi_+&=\text{ess}\sup_{\re^N\setminus\Om}\psi(x).
\end{align*}

We recall the following well known result used in the proof of the Fr\'echet-Kolmogorov Theorem (see e.g. Theorem 2.21 in \cite{Adams}).  
\begin{lemma}\label{lem:adams}
Let $u:\re^N\to\re$ be an integrable function, and let $\tau_au(x)=u(x+a)$ define the translation operator, $a\in\re^N$.  Then $\|\tau_au-u\|_{L^1}\to 0$ as $|a|\to 0$.
\end{lemma}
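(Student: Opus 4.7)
The plan is to run the standard three-epsilon argument that leverages density of continuous compactly supported functions in $L^1$. First I would fix $\eps>0$ and choose $\varphi\in C_c(\re^N)$ with $\|u-\varphi\|_{L^1}<\eps/3$; this is possible because $C_c(\re^N)$ is dense in $L^1(\re^N)$ (a standard consequence of outer regularity of Lebesgue measure, first shown for characteristic functions of measurable sets, extended to simple functions by linearity, and then to general $L^1$ functions by monotone convergence). Because Lebesgue measure is translation invariant, the translation operator $\tau_a$ is an $L^1$-isometry, so one automatically gets
\[
\|\tau_a u - \tau_a\varphi\|_{L^1}=\|u-\varphi\|_{L^1}<\eps/3.
\]

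Next I would handle the continuous compactly supported $\varphi$ directly. Setting $K=\supp\varphi$ and restricting to $|a|\le 1$, the difference $\tau_a\varphi-\varphi$ is supported in the fixed compact set $K'=K\cup(K-\overline{B_1(0)})$. Since $\varphi$ is uniformly continuous on $\re^N$, I have $\sup_{x}|\varphi(x+a)-\varphi(x)|\to 0$ as $|a|\to 0$, so
\[
\|\tau_a\varphi-\varphi\|_{L^1}\le |K'|\,\sup_{x\in\re^N}|\varphi(x+a)-\varphi(x)|\longrightarrow 0,
\]
and therefore $\|\tau_a\varphi-\varphi\|_{L^1}<\eps/3$ whenever $|a|$ is sufficiently small.

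Finally, a single application of the triangle inequality closes the argument:
\[
\|\tau_a u-u\|_{L^1}\le \|\tau_a u-\tau_a\varphi\|_{L^1}+\|\tau_a\varphi-\varphi\|_{L^1}+\|\varphi-u\|_{L^1}<\eps.
\]
The only nontrivial ingredient is the density of $C_c(\re^N)$ in $L^1(\re^N)$; the remainder is just uniform continuity and translation invariance. Because this is a well-known preliminary result (as the paper notes in its citation to Adams), I would simply invoke the density fact rather than reprove it in the body of the paper.
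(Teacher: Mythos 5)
Your argument is correct and is the standard proof of continuity of translations in $L^1$: approximate by a compactly supported continuous function, exploit that $\tau_a$ is an $L^1$-isometry, and use uniform continuity plus the fixed compact support to control the middle term. The paper itself supplies no proof — it simply cites Theorem~2.21 of Adams — and your three-epsilon argument is precisely the one given there, so you are in agreement with the paper's (implicit) approach.
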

We also note another well known fact (Corollary 2.32 in \cite{Folland}).
\begin{lemma}\label{lem:folland}
If $f_n\to f$ in $L^1$, then there is a subsequence $\{f_{n_j}\}$ such that $f_{n_j}\to f$ almost everywhere.
\end{lemma}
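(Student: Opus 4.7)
The plan is a classical Borel--Cantelli-style argument: extract a sufficiently fast convergent subsequence in $L^1$, then use summability and monotone convergence to get pointwise almost-everywhere convergence.

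First, since $\|f_n - f\|_{L^1}\to 0$, I would construct indices $n_1<n_2<n_3<\cdots$ recursively so that $\|f_{n_j}-f\|_{L^1}\le 2^{-j}$ for each $j$. At the $j$-th stage this is possible because, by hypothesis, only finitely many $n$ can fail the bound $\|f_n-f\|_{L^1}\le 2^{-j}$; pick any such $n$ exceeding $n_{j-1}$.

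Next, I would consider the nonnegative function $g(x):=\sum_{j=1}^\infty |f_{n_j}(x)-f(x)|$, which is measurable as the pointwise limit of its nondecreasing partial sums. By the monotone convergence theorem applied to the partial sums (or equivalently by Tonelli),
$$\int g(x)\,dx=\sum_{j=1}^\infty \|f_{n_j}-f\|_{L^1}\le \sum_{j=1}^\infty 2^{-j}=1.$$
Thus $g\in L^1$, so $g(x)<\infty$ for almost every $x$. Convergence of the nonnegative series $\sum_j |f_{n_j}(x)-f(x)|$ at such an $x$ forces $|f_{n_j}(x)-f(x)|\to 0$, i.e., $f_{n_j}(x)\to f(x)$ almost everywhere.

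There is essentially no serious obstacle here; this is the standard proof. The only care needed is in ensuring the chosen $n_j$ are strictly increasing (so that the result is genuinely a subsequence), and in justifying the interchange of sum and integral, which is immediate from the monotone convergence theorem since the integrands are nonnegative. Note also that the statement is purely measure-theoretic and makes no use of the specific structure of $\re^N$ or the kernel $J$; it applies on any measure space.
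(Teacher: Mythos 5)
Your proof is correct and is the standard rapidly-convergent-subsequence argument. The paper does not prove this lemma itself but cites it as Corollary 2.32 in Folland's \emph{Real Analysis}, where essentially this same argument (extract a subsequence with summable $L^1$ errors, then apply monotone convergence to conclude almost-everywhere convergence) is given.
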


\begin{thm}[Strong maximum principle]\label{thm:smp}
Let $u$ be a $C([0,T],L^\infty(\re^N))$ solution to \eqref{ND}.  Suppose that $k$ satisfies \eqref{k3}.  If either $U_+(t_0)\ge u_+(t)$ or $U_-(t_0)\le u_-(t)$ for some $t_0\in(0,T]$ and all $t\in[0,t_0]$, then $u$ is a trivial solution.
\end{thm}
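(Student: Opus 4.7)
The plan is to reduce to the max case by symmetry, construct a ``near-maximal'' point $x^\ast \in \cl\Om$ by an $\epsilon$-approximation, and pass to the limit using Fr\'echet--Kolmogorov translation continuity of $J$ (Lemma~\ref{lem:adams}), in order to recover the pointwise evaluation used in the continuous case. Replacing $u$ by $-u$ and $k(a,b)$ by $k(-a,-b)$ preserves nonnegativity, \eqref{Lip}, and \eqref{k3}, so I may assume the first alternative: $M := U_+(t_0) \ge u_+(t)$ for all $t \in [0,t_0]$. Set $v := M - u$; then $v(t,\cdot) \ge 0$ a.e.\ for each $t \in [0,t_0]$, $v_0 \ge 0$ a.e., and $\text{ess inf}_{\Om} v(t_0,\cdot) = 0$.

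The equation, rewritten in $v$, reads
\[v_t(t,x) = \int_{\re^N} k\bigl(u(t,x),u(t,y)\bigr)\,v(t,y)\,J(x-y)\,\di y - v(t,x)\int_{\re^N} k\bigl(u(t,x),u(t,y)\bigr)\,J(x-y)\,\di y,\]
so $v_t \ge -Cv$ a.e.\ with $C$ depending only on $\|J\|_{L^1}$ and $\sup k$ on the bounded range of $u$. Using Proposition~\ref{prop:tdiff} and integrating the differential inequality via \eqref{inteqn}, I obtain the Gronwall bound $v(t,x) \le v(t_0,x)\,e^{C(t_0-t)}$ for a.e.\ $x$ and all $t \in [0,t_0]$. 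For each $\epsilon > 0$ the set $A_\epsilon := \{x \in \Om : v(t_0,x) < \epsilon\}$ has positive measure (by $U_+(t_0)=M$), and Gronwall forces $v(s,x) \le \epsilon e^{CT}$ on $A_\epsilon$ for all $s \in [0,t_0]$. Applying \eqref{inteqn} and the identity $u(s,y) - u(s,x) = v(s,x) - v(s,y)$, then rearranging, gives for a.e.\ $x \in A_\epsilon$
\[\int_0^{t_0}\!\int_{\re^N} k\bigl(u(s,x),u(s,y)\bigr)\,v(s,y)\,J(x-y)\,\di y\,\di s = \int_0^{t_0}\!v(s,x)\!\int_{\re^N}\!k\,J\,\di y\,\di s + u_0(x) - u(t_0,x) \le C_2\,\epsilon,\]
using $u_0 \le M$ a.e.\ (from $t=0$ in the hypothesis) and $u(t_0,x) > M - \epsilon$.

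Picking $x_{\epsilon_n} \in A_{\epsilon_n}$ (from the full-measure subset where all pointwise bounds hold) with $\epsilon_n \to 0$ and extracting a convergent subsequence $x_{\epsilon_n} \to x^\ast \in \cl\Om$, Gronwall forces $u(s,x_{\epsilon_n}) \to M$ uniformly in $s$. I then argue that
\[\int_0^{t_0}\!\int_{\re^N}\! k\bigl(u(s,x_{\epsilon_n}),u(s,y)\bigr)v(s,y)J(x_{\epsilon_n}-y)\,\di y\,\di s \longrightarrow \int_0^{t_0}\!\int_{\re^N}\! k\bigl(M,u(s,y)\bigr)v(s,y)J(x^\ast-y)\,\di y\,\di s,\]
with the $k$-error controlled by \eqref{Lip} together with $|u(s,x_{\epsilon_n}) - M| = O(\epsilon_n)$, and the $J$-error controlled by $\|\tau_{x_{\epsilon_n}-x^\ast}J - J\|_{L^1} \to 0$ via Lemma~\ref{lem:adams}. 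The limiting integral is nonnegative yet $\le 0$, so its nonnegative integrand vanishes a.e.\ in $(s,y)$. Using $L^\infty$-continuity of $u$ in $t$, one extracts a sequence $s_n \to t_0$ on which the integrand vanishes a.e.\ in $y$ and passes $s_n \to t_0$ (via uniform a.e.\ convergence from Lemma~\ref{lem:folland}) to obtain $k(M,u(t_0,y))[u(t_0,y)-M]J(x^\ast-y) = 0$ for a.e.\ $y$, which is exactly \eqref{semitriv} with $U = M$ and $\chi = x^\ast$. Lemma~\ref{lem:semitriv} (invoking \eqref{k3}) then forces $u(t_0,\cdot)$ to solve \eqref{triv}, and Lemma~\ref{lem:triv} propagates triviality to all times.

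The main obstacle is the convergence step, since $u$ has only $L^\infty$ spatial regularity: the integrand depends on $x$ both through the discontinuous composition $u(s,x)$ inside $k$ and through the shifted kernel $J(x-y)$, so neither ordinary continuity nor classical dominated convergence applies directly. Both difficulties dissolve simultaneously because on $A_\epsilon$ the Gronwall bound pins $u(s,x_{\epsilon_n})$ uniformly close to $M$---eliminating the discontinuity inside $k$ in the limit---while $L^1$-translation continuity of $J$ absorbs the shift against the $L^\infty$-bounded remainder $k(M,u(s,y))v(s,y)$. This is precisely why the hypotheses that $k$ be locally Lipschitz and $J$ merely integrable fit together exactly right.
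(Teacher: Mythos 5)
Your proof is correct, but it takes a genuinely different route from the paper's. The paper works with the \emph{time derivative} at the near-extremum: it evaluates \eqref{ND} at $(t_0,x_n(t_0))$, passes to the limit using Lemmas~\ref{lem:adams}--\ref{lem:folland} to get $\lim_n u_t(t_0,x_n(t_0)) \le 0$, and then splits into two cases — if the limit is $0$ one lands on \eqref{semitriv} directly, and if it is strictly negative one uses Lemma~\ref{lem:lip} (uniform Lipschitz continuity of $u_t$ in $t$) together with the fundamental theorem of calculus to force $u(t,x_n(t_0)) > U_+(t_0)$ at a slightly earlier time, contradicting maximality. You instead work with the \emph{integrated} equation: setting $v = M-u \ge 0$ and observing $v_t \ge -Cv$, the backward Gr\"onwall inequality propagates smallness of $v$ from $t_0$ to all of $[0,t_0]$, and the integral identity \eqref{inteqn} then bounds the nonnegative ``gain'' term $\int_0^{t_0}\!\int k\,v_y\,J$ by $C_2\epsilon$ on $A_\epsilon$; passing to the limit in $\epsilon_n\to0$ via translation continuity of $J$ and Lipschitz continuity of $k$ makes that integral vanish, and the resulting a.e. vanishing in $(s,y)$, transported to $s=t_0$ using $L^\infty$-continuity of $u$ in time, gives \eqref{semitriv}. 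Both arguments funnel through the same final Lemmas~\ref{lem:semitriv} and~\ref{lem:triv}. What your route buys: it unifies the paper's two-case dichotomy into a single limiting argument, it avoids any appeal to Lemma~\ref{lem:lip} (Lipschitz continuity of $u_t$) — mere time-continuity of $u$ suffices — and the Gr\"onwall step makes transparent \emph{why} the near-maximal point must stay near-maximal backward in time, which in the paper is encoded indirectly in the FTC contradiction. One small presentational point: in your last step, Lemma~\ref{lem:folland} is not quite the right citation, since $\|u(s_n,\cdot)-u(t_0,\cdot)\|_\infty\to0$ already yields a.e. pointwise convergence directly (take the union of the countably many exceptional null sets), or more cleanly, observe that $s\mapsto\int k(M,u(s,y))v(s,y)J(x^\ast-y)\,\di y$ is continuous by dominated convergence and hence vanishes identically on $[0,t_0]$ once it vanishes a.e.
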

\begin{proof}
Consider the first case of $U_+(t_0)\ge u_+(t)$.  We have $U_+(t)=\lim_{n\to\infty}u(t,x_n(t))$ for some sequence $(x_n(t))\subset\Om$.  By the relative compactness of $\Om$, we may redefine $(x_n)$ as one of its convergent subsequences: $x_n(t)\to x(t)\in\overline\Om$. 

Let us evaluate \eqref{ND} at $t=t_0,x=x_n(t_0)$ and make a change of variables, $y=z+x_n(t_0)$, setting $u_n=u(t_0,x_n(t_0))$ for brevity:
\begin{align}\label{eqn}
\begin{split}
u_t(t_0,&x_n(t_0))\\
&=\int_{\re^N}k\bigl(u_n,u(t_0,y)\bigr)\bigl[u(t_0,y)-u_n\bigr]J(x_n(t_0)-y)\di y\\
&=\int_{B_1(0)}k\bigl(u_n,u(t_0,z+x_n(t_0))\bigr)\bigl[u(t_0,z+x_n(t_0))-u_n\bigr]J(-z)\di z,
\end{split}
\end{align}
where the new integration domain over the unit ball comes from $\supp J=B_1(0)$.

We intend to pass to the limit in the integrand.  Since $u$ is in $L^\infty$,  the integrand is uniformly bounded by integrable function $CJ(-z)$, for some constant $C$.  To see that it converges pointwise a.e., we note that the functions
\begin{align*}
u(t_0,z+x_n(t_0))=\tau_{x_n(t_0)-x(t_0)}u(t_0,z+x(t_0)),\quad n=1,2,3,...
\end{align*}
converge to $u(t_0,z+x(t_0))$ in $L^1$ by Lemma \ref{lem:adams} (clearly, they are integrable on $B_1(0)$).  By Lemma \ref{lem:folland}, this means they converge for almost all $z$ (after redefining $(x_n)$ as one of its subsequences).  Finally, since $k$ is a continuous function, the integrand in \eqref{eqn} converges for almost all $z$.

Therefore, the dominated convergence theorem lets us take the limit in \eqref{eqn}:
\begin{align}\label{eval}
\begin{split}
\lim_{n\to\infty}&u_t(t_0,x_n(t_0))\\
&=\int_{B_1(0)}k\bigl(U_+(t_0),u(t_0,z+x(t_0))\bigr)\bigl[u(t_0,z+x(t_0))-U_+(t_0)\bigr]J(-z)\di z\\
&=\int_{\re^N}k\bigl(U_+(t_0),u(t_0,y)\bigr)\bigl[u(t_0,y)-U_+(t_0)\bigr]J(x(t_0)-y)\di y\\
&\le 0,
\end{split}
\end{align}
since $u(t_0,y)\le U_+(t_0)$ for almost all $y$.

Let us consider \eqref{eval} at $t=t_0$.  Either the limit is zero, or it is negative.  If it is zero, then the nonnegativity of the integrand demands that the latter vanish almost everywhere.  That is, $u(t_0,y)$ solves \eqref{semitriv}.  By Lemma \ref{lem:semitriv}, $u(t_0,y)$ also solves \eqref{triv} and is a trivial solution.  By Lemma \ref{lem:triv}, $u$ is identically trivial, in which case the desired result holds.

So, let us suppose, to the contrary, that $u$ is not a trivial solution.  Then the second case holds, and $\lim_{n\to\infty}u_t(t_0,x_n(t_0))< 0$.  This means that
\begin{align}\label{utsmall}
\begin{split}
u_t(t_0,x_n(t_0))<-\epsilon
\end{split}
\end{align}
for some $\epsilon>0$ and all $n$ sufficiently large.  We claim there is some $\delta>0$ such that $u_t(t,x_n(t_0))<-\epsilon/2$ for all such $n$ and each $t\in[t_0-\delta,t_0]$.  If not, then, for each $\delta>0$, there exists $n_\delta\in\mathbb{N}$ and $t_\delta\in[t_0-\delta,t_0]$ such that $u_t(t_\delta,x_{n_\delta}(t_0))\ge-\epsilon/2$.  Subtracting \eqref{utsmall} from this inequality gives
\begin{align*}
u_t(t_\delta,x_{n_\delta}(t_0))-u_t(t_0,x_n(t_0))>\epsilon/2.
\end{align*}
But by Lemma \ref{lem:lip}, the left hand side can be made arbitrarily small by choosing a sufficiently small $\delta$, a contradiction.  

So, the fundamental theorem of calculus gives:
\begin{align*}
u(t,x_n(t_0))-u(t_0,x_n(t_0))&=-\int_{t}^{t_0}u_s(s,x_n(t_0))\di s\\
&>(t_0-t)\epsilon/2
\end{align*}
for any $t\in[t_0-\delta,t_0]$.  Adding and subtracting $U_+(t_0)$ to the lhs and rearranging:
\begin{align*}
u(t,x_n(t_0))-U_+(t_0)>u(t_0,x_n(t_0))-U_+(t_0)+(t_0-t)\epsilon/2.
\end{align*}
We may choose $n$ large enough to make $u(t_0,x_n(t_0))-U_+(t_0)$ arbitrarily small, so we conclude that $u(t,x_n(t_0))>U_+(t_0)>U_+(t)$, a contradiction to $U_+(t_0)$ being a global supremum.

Now, consider the second case of $U_-(t_0)\le u_-(t)$.  We note that $U_-(t)=-(-U)_+(t)$, where $(-U)_+=\sup(-u)$.  This means that $(-U)_+(t_0)\le (-u)_+(t)$ for all $t\in[0,t_0]$.  Now, the function $-u$ also solves \eqref{ND}, but with $\bar k(u,v):=k(-u,-v)$ in place of $k(u,v)$.  It is clear that $\bar k$ also satisfies \eqref{k3}, and the hypothesis of the theorem.  Therefore, $-u$ is a trivial solution, which implies that $u$ is a trivial solution.
\end{proof}

We can use this to deduce a positivity principle similar to those in \cite{Rossi,Garcia}.
\begin{corollary}[Positivity preservation]\label{thm:pos}
Let $u$ be a $C^1([0,T])\cap L^\infty(\re^N)$ solution to \eqref{ND}, for $k$ satisfying \eqref{k3}, and suppose that $u_0(x),\psi(x)\ge 0$ a.e. on $\re^N$ and $\re^N\setminus\Om$.  Then $u(t,x)\ge 0$ a.e. on $[0,T]\times\re^N$.
\end{corollary}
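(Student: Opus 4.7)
The strategy is to argue by contradiction via the second alternative of Theorem~\ref{thm:smp}, choosing the time $t_0$ to be a global minimizer of $U_-(\cdot)$ on $[0,T]$ and then using Lemma~\ref{lem:triv} together with $u_0\ge 0$ to extract a contradiction.

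First I would assume, contrary to the conclusion, that $u<0$ on a set of positive measure in $[0,T]\times\re^N$. Since $u=\psi\ge 0$ a.e.\ on $\re^N\setminus\Om$, any negativity is confined to $[0,T]\times\Om$, so there exists some $t^{\ast}\in(0,T]$ with $U_-(t^{\ast})<0$. The hypothesis $u\in C^1([0,T],L^\infty(\re^N))$ makes $t\mapsto U_-(t)$ continuous on the compact interval $[0,T]$ (via $|U_-(t_1)-U_-(t_2)|\le \|u(t_1,\cdot)-u(t_2,\cdot)\|_{L^\infty(\re^N)}$), so $U_-$ attains its minimum at some $t_0\in[0,T]$; because $u_0\ge 0$ forces $U_-(0)\ge 0>U_-(t^{\ast})\ge U_-(t_0)$, the minimizer satisfies $t_0\in(0,T]$ and $U_-(t_0)<0$.

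Next I would verify the hypothesis $U_-(t_0)\le u_-(t)$ required by the second case of Theorem~\ref{thm:smp}. Writing $u_-(t)=\min\{U_-(t),\psi_-\}$, I split on the sign of $U_-(t)$: if $U_-(t)\le 0$, then $u_-(t)=U_-(t)\ge U_-(t_0)$ by minimality of $t_0$; if $U_-(t)>0$, then $u_-(t)\ge \min\{0,\psi_-\}=0>U_-(t_0)$, using $\psi_-\ge 0$. In either case the inequality holds for every $t\in[0,t_0]$. Theorem~\ref{thm:smp} therefore forces $u$ to be a trivial solution, and Lemma~\ref{lem:triv} then yields $u(t,\cdot)\equiv u(0,\cdot)=u_0(\cdot)$ for all $t\in[0,T]$. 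Consequently $U_-(t_0)$ equals the essential infimum of $u_0$ over $\Om$, which is $\ge 0$ by hypothesis, contradicting $U_-(t_0)<0$.

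The main obstacle is this second step, namely the verification of $U_-(t_0)\le u_-(t)$ on the full interval $[0,t_0]$. A naive choice of $t_0$, such as the first time $U_-$ crosses zero, is not enough, because $u_-(t)$ also records the exterior data through $\psi_-$ and could in principle be smaller than $U_-(t_0)$. The remedy is to pick $t_0$ as the \emph{global} minimizer of $U_-$ on $[0,T]$ and then to exploit $\psi\ge 0$: that $\psi_-$ is nonnegative is precisely what makes the branch $U_-(t)>0$ harmless. All other pieces---continuity of $U_-$, the reduction of triviality to time-constancy, and the final comparison with $u_0$---are straightforward consequences of results already established.
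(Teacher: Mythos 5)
Your proof is correct and follows the same overall strategy as the paper's: argue by contradiction, feed the negativity of $U_-$ into the second alternative of Theorem~\ref{thm:smp} to force $u$ trivial, and then conclude $u\equiv u_0\ge 0$. The one place you genuinely diverge is in how the interval for the strong maximum principle is chosen. The paper locates a subinterval $[a,b]\subset[0,t_0]$ on which $U_-$ is negative and attains its minimum at $b$, so that $u_-(t)=U_-(t)$ holds automatically throughout $[a,b]$ and no comparison with $\psi_-$ is needed; you instead take $t_0$ to be the \emph{global} minimizer of $U_-$ on $[0,T]$ and apply Theorem~\ref{thm:smp} on the full interval $[0,t_0]$, which forces the explicit casework $U_-(t)\le 0$ versus $U_-(t)>0$ that you carry out. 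Both routes work; yours is slightly cleaner in that it avoids the paper's somewhat loosely stated ``there exists a subinterval on which $U_-$ is decreasing,'' while the paper's avoids your case split on the sign of $U_-(t)$. One small economy you could make: rather than rederiving continuity of $U_-$ from $\bigl|\operatorname{ess\,inf}f-\operatorname{ess\,inf}g\bigr|\le\|f-g\|_{L^\infty}$ together with time-Lipschitz continuity of $u$, you can simply cite Lemma~\ref{lem:infcont}, which is exactly what the paper does.
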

\begin{proof}
If not, then, since $\psi_-\ge 0$ for all $t$, we must have $U_-(t_0)<0$ for some $t_0>0$.  By continuity (Lemma \ref{lem:infcont}), this means there exists a subinterval $[a,b]\subset[0,t_0]$ on which $U_-(t)$ is decreasing and is negative.  Of course, this means that $U_-(b)\le u_-(t)$ for all $t\in[a,b]$.  By the strong maximum principle and solution uniqueness, this implies that $u$ is a trivial solution for all time.  However, since such solutions do not depend on time, this means that $u=u_0\ge 0$, a contradiction to $u$ being negative.
\end{proof}

\section{Global wellposedness}\label{sec:gwellp}
Using the Strong Maximum Principle (Theorem \ref{thm:smp}), we first deduce that the $L^\infty$ norm of $u$ is a nonincreasing function of time.  Combining this with the local wellposedness result (Theorem \ref{thm:wellp}), we show that \eqref{ND} is globally wellposed.

\begin{lemma}\label{lem:infcont}
Let $u$ be a $C([0,T],L^\infty(\re^N))$ solution to \eqref{ND}.  Then $u_-,u_+,U_-(t),$ and $U_+(t)$ are continuous functions.
\end{lemma}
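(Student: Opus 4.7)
The plan is to exploit the fact that, in the definition of each of $u_\pm(t)$ and $U_\pm(t)$, the only $t$-dependence comes through $u(t,\cdot)\in L^\infty(\re^N)$, and that all four functionals are $1$-Lipschitz with respect to the $L^\infty$ norm. Combined with the hypothesis that $t\mapsto u(t,\cdot)$ is continuous into $L^\infty(\re^N)$, this immediately yields continuity in $t$.

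Concretely, I would first establish the following elementary inequality for any $f,g\in L^\infty(\re^N)$ and any measurable set $E\subset\re^N$:
\begin{align*}
\bigl|\esssup_{E} f - \esssup_{E} g\bigr|\le \|f-g\|_{L^\infty(E)}\le \|f-g\|_{L^\infty(\re^N)}.
\end{align*}
The first inequality follows by writing $f = g + (f-g)$, noting $\esssup_E(f) \le \esssup_E(g) + \esssup_E(f-g)$ and $\esssup_E(f-g)\le \|f-g\|_{L^\infty(E)}$, and then swapping the roles of $f$ and $g$. The analogous statement for $\essinf$ is obtained either by the same argument or by applying the above to $-f,-g$.

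Applying this with $E=\Om$ and $f=u(t,\cdot)$, $g=u(s,\cdot)$ yields
\begin{align*}
|U_+(t)-U_+(s)|+|U_-(t)-U_-(s)|\le 2\,\|u(t,\cdot)-u(s,\cdot)\|_{L^\infty(\re^N)},
\end{align*}
and the same bound holds with $U_\pm$ replaced by $u_\pm$ by taking $E=\re^N$. Since by hypothesis $u\in C([0,T],L^\infty(\re^N))$, the right-hand side tends to zero as $s\to t$, proving the continuity (in fact, uniform continuity) of all four functions $u_\pm, U_\pm$ on $[0,T]$.

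The argument is essentially routine; the only point that requires any care is checking that the $1$-Lipschitz property of $\esssup$ and $\essinf$ does not leak outside the relevant set. Since restricting $\esssup$ to a subset $E\subset\re^N$ only decreases $\|f-g\|_{L^\infty(E)}$ relative to $\|f-g\|_{L^\infty(\re^N)}$, the restriction to $\Om$ poses no additional obstacle, and there is no genuine difficulty to single out.
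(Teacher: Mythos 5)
Your proof is correct, and it takes a genuinely simpler route than the paper's. The key observation you use is that $\esssup_E$ and $\essinf_E$ are $1$-Lipschitz maps from $L^\infty(\re^N)$ to $\re$, so continuity of $t\mapsto u(t,\cdot)$ in the $L^\infty$ norm (which is exactly what the hypothesis $u\in C([0,T],L^\infty(\re^N))$ asserts) immediately transfers to $u_\pm$ and $U_\pm$. The paper instead first invokes Proposition \ref{prop:tdiff} to show that $u_t\in C([0,T])\cap L^\infty(\re^N)$, deduces via the mean value theorem a uniform Lipschitz bound $|u(s,x)-u(t,x)|\le C|s-t|$, and then runs a proof by contradiction using extremizing sequences $(x_{mn})$ and a chain of triangle inequalities. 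That argument is substantially longer and uses the PDE structure (through Proposition \ref{prop:tdiff}); yours uses only the stated function-space hypothesis. The trade-off is that the paper's route would still work if one interpreted the regularity hypothesis more weakly (say, pointwise continuity in $t$ for a.e.\ $x$, plus spatial $L^\infty$ bounds), since the needed $L^\infty$-in-time control is then \emph{derived} from the equation; your proof assumes it directly from the $C([0,T],L^\infty(\re^N))$ hypothesis, which is legitimate given the lemma as stated and how solutions are constructed in Theorem \ref{thm:wellp}. Both are valid; yours is cleaner.
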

\begin{proof}
Consider the $U_-$ case.  Suppose this is false.  Then $U_-$ is discontinuous at some $t_0$ in $[0,T]$.  

A useful observation is that $u$ is uniformly continuous in time.  That is, there exists a constant $C>0$ such that
\begin{align}\label{uniftime}
|u(s,x)-u(t,x)|\le C|s-t|,\quad x\in\re^N.
\end{align}
Indeed, Proposition \ref{prop:tdiff} shows that $u_t$ is both time-continuous and in $L^\infty(\re^N)$ on the closed interval $[0,T]$, so the mean value theorem gives estimate \eqref{uniftime}.

Now, suppose that $\lim_{t\to t_0^+}U_-(t)$ does not exist (the left hand limit case is analogous).  Then there is some $\epsilon>0$ such that
\begin{align}\label{infcompare}
U_-(t_0)-U_-(t)\ge \epsilon
\end{align}
for infinitely many times $t=t_{1/m},m=1,2,3,...$ inside $[t_0,T]$, with $t_{1/(m+1)}<t_{1/m}$ for each $m$, and $t_{1/m}\to t_0$ as $m\to\infty$.  Without loss of generality, we select this sequence such that $|t_{1/m}-t_0|<\epsilon/(3C)$ (cf. \eqref{uniftime}).

Note that $U_-(t_0)$ must be greater than each $U_-(t_{1/m})$.  We may approximate $U_-(t_0)$ from above using a sequence $(x_{0n})$ such that $u(t_0,x_{0n})\searrow U_-(t_0)$.  If it were true that $U_-(t_{1/m})-U_-(t_0)\ge\epsilon$, then we could simply choose $n$ large enough such that, via \eqref{uniftime}, we would get
\begin{align*}
u(t_{1/m},x_{0n})-U_-(t_0)&\le |u(t_{1/m},x_{0n})-u(t_0,x_{0n})|+|u(t_0,x_{0n})-U_-(t_0)|\\
&<C[\epsilon/(3C)]+\epsilon/3=2\epsilon/3.
\end{align*}
Adding and subtracting $U_-(t_{1/m})$ from the left hand side and bounding from above gives that 
\begin{align*}
u(t_{1/m},x_{0n})-U_{-}(t_{1/m})<-\epsilon/3,
\end{align*}
which would imply that, for any $m$, $U_-(t_{1/m})$ is not the infimum at time $t_{1/m}$.

Now, associate with each time $t_{1/m}$ a sequence $(x_{mn})$ of points in $\Omega$ such that we have $u(t_m,x_{mn})\searrow U_-(t_m)$ as $n\to\infty$.  Then, for any $m$ and $n$, eq. \eqref{infcompare} and the triangle inequality give:
\begin{align*}
\eps\le&|U_-(t_{1/m})-u(t_{1/m},x_{mn})|+|u(t_{1/m},x_{mn})-u(t_0,x_{mn})|\\
&+|u(t_0,x_{mn})-U_-(t_0)|.
\end{align*}
By \eqref{uniftime} and our choice of $t_{1/m}$, the second term is less than $\epsilon/3$.  We choose $n$ so large that the first term is less than $\epsilon/3$.  We find that
\begin{align*}
\epsilon/3< |u(t_0,x_{mn})-U_-(t_0)|.
\end{align*}
Since $U_-(t_0)\le u(t_0,x)$ for almost all $x$ in $\Omega$, this means that
\begin{align*}
u(t_0,x_{mn})>U_{-}(t_0)+\epsilon/3.
\end{align*}
But this combined with \eqref{infcompare} shows that $u(t_0,x_{mn})>U_-(t_{1/m})+4\epsilon/3$ for each $m$, or $|u(t_0,x_{mn})-U_-(t_{1/m})|>4\epsilon/3$.  Since \eqref{uniftime} demands that $|u(t_{1/m},x_{mn})-u(t_0,x_{mn})|<\epsilon/3$ for each $m$, this presents a contradiction.

For the $U_+$ case, we need only note that $U_+(t)=-\text{ess}\inf_{\re^N}(-u(t,x))$.  It follows from the previous analysis that this is a continuous function.

Of course, this means that $u_-(t)$ and $u_+(t)$ are also continuous functions, since $u_-=\min(\psi_-,U_-)$, and $u_+=\max(\psi_+,U_+)$, and the comparison functions $\max/\min$ are, themselves, continuous.
\end{proof}

We now show that the $L^\infty(\re^N)$ norm of bounded solutions to \eqref{ND} decays with time.  Explicit decay estimates were obtained in \cite{IgnatRossi} for the nonlocal Cauchy problem involving linear diffusion and nonlinear convection.
\begin{lemma}[$L^\infty$ decay]\label{lem:Linf}
Let $u$ be a $C([0,T],L^\infty(\re^N))$ solution to \eqref{ND}.  Then $u^+=\|u\|_\infty$ is a nonincreasing function.
\end{lemma}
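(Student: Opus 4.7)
The plan is to argue by contradiction, pushing a strict increase of $u^+$ into a situation where Theorem \ref{thm:smp} forces $u$ to be trivial, and then observing that trivial solutions are time-independent. Suppose there exist $0 \le t_1 < t_2 \le T$ with $u^+(t_1) < u^+(t_2)$. By considering the time-shifted solution $v(s,x) = u(s+t_1,x)$ on $[0,T-t_1]$ (which still satisfies \eqref{ND} by the autonomy of the equation), there is no loss of generality in taking $t_1 = 0$. Since $u^+(t) = \max(u_+(t), -u_-(t))$, at least one of
\[
u_+(t_2) > u^+(0) \txtor -u_-(t_2) > u^+(0)
\]
holds. These two cases are symmetric via the observation already used in Theorem \ref{thm:smp} that $-u$ solves \eqref{ND} with $\bar k(a,b) = k(-a,-b)$ in place of $k$ (and $\bar k$ inherits \eqref{k3}), so it suffices to treat the first one.

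Assume $u_+(t_2) > u^+(0)$. By Lemma \ref{lem:infcont}, $u_+$ is continuous on the compact interval $[0,t_2]$, so it attains its maximum there at some $t_0$. Then
\[
u_+(t_0) \;\ge\; u_+(t_2) \;>\; u^+(0) \;\ge\; u_+(0) \;\ge\; \psi_+.
\]
The strict inequality $u_+(t_0) > u_+(0)$ forces $t_0 > 0$, and $u_+(t_0) > \psi_+$ together with $u_+(t_0) = \max(\psi_+, U_+(t_0))$ forces $U_+(t_0) = u_+(t_0)$. By the defining property of $t_0$, we have $U_+(t_0) = u_+(t_0) \ge u_+(t)$ for every $t \in [0,t_0]$, which is exactly the hypothesis of the first alternative in Theorem \ref{thm:smp}. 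Hence $u$ is a trivial solution. By Definition \ref{def:triv} the integrand in \eqref{ND} vanishes identically, so $u_t \equiv 0$ and $u(t,\cdot) = u(0,\cdot)$ for all $t$. In particular $u_+(t_0) = u_+(0)$, contradicting $u_+(t_0) > u_+(0)$.

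The main obstacle is really just a bookkeeping one: ensuring that the time $t_0$ at which we apply Theorem \ref{thm:smp} lies in $(0,T]$ and, more importantly, that the supremum there is realized inside $\Omega$ rather than on $\re^N\setminus\Om$ (otherwise the boundary datum $\psi$ could in principle drive $u_+$ upward without contradiction). The strict inequality $u_+(t_0) > \psi_+$, forced by the assumed increase in $u^+$ together with $u_+(0) \ge \psi_+$, is exactly what converts $u_+(t_0)$ into $U_+(t_0)$ and unlocks the Strong Maximum Principle. Everything else is continuity of $u_\pm, U_\pm$ (Lemma \ref{lem:infcont}) and the time-independence of trivial solutions (immediate from the proof of Lemma \ref{lem:triv}).
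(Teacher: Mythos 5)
Your proof is correct and follows essentially the same strategy as the paper: derive a contradiction from Theorem~\ref{thm:smp} (which forces triviality, hence time-independence), and use the $-u$ symmetry to cover the infimum side. The one place you diverge is in locating the time at which to apply the Strong Maximum Principle. The paper asserts that if $u_+$ is not nonincreasing then, ``by continuity, $u_+$ is increasing on some interval $[a,b]$,'' and then excludes the single time at which $u_+$ could equal $\psi_+$; that first step is not actually automatic from continuity alone (a continuous function can fail to be monotone on every subinterval). You instead take $t_0$ to be a point where $u_+$ attains its maximum on the compact interval $[0,t_2]$, which immediately gives $u_+(t_0)\ge u_+(t)$ for all $t\in[0,t_0]$ and sidesteps that issue. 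In both arguments the crucial observation is the same: the strict inequality $u_+(t_0) > \psi_+$ upgrades $u_+(t_0)$ to $U_+(t_0)$, which is what the hypothesis of Theorem~\ref{thm:smp} requires. Your version is thus a slightly cleaner, more rigorous rendering of the same idea.
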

\begin{proof}
First, we show that $u_+$ is a nonincreasing function.  Assume this is false.  Then, by continuity (cf. Lemma \ref{lem:infcont}), $u_+$ is increasing on some interval $[a,b]\subset[0,T]$.  Since $\psi_+$ is a nonincreasing function, $u_+(t)=\psi_+$ for, at most, one time $t_0\in[a,b]$.  Let us consider a subinterval $[c,d]\subset[a,b]\setminus\{t_0\}$.  Then, for all $t\in[c,d]$, $u_+(t)=U_+(t)$.  This means that $U_+(d)\ge u_+(t)$ for all $t\in[c,d]$, so the strong maximum principle implies that $u$ is a trivial solution.  But such solutions do not depend on time, which contradicts $U_+(d)>U_+(c)$.  

Since $-u$ solves \eqref{ND} with $\bar k(u,v)=k(-u,-v)$ in place of $k(u,v)$, we see that $(-u)_+$ is a nonincreasing function.  Since $-u_-=(-u)_+$, and since $u^+=\max(u_+,-u_-)$, we find that $u^+$ is a nonincreasing function.
\end{proof}

\begin{thm}[Global wellposedness]\label{thm:gwellp}
There exists a unique $C[0,\infty)\cap L^\infty(\re^N)$ solution to \eqref{ND} subject to \eqref{BC} and \eqref{IC}.
\end{thm}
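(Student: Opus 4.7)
The plan is to iterate the local wellposedness result (Theorem \ref{thm:wellp}) and exploit the fact, established in Lemma \ref{lem:Linf}, that the $L^\infty$ norm of any solution is nonincreasing in time. The classical danger in continuation arguments is that the length of successive time steps might shrink to zero, leaving a finite-time blow-up point. Here that is precisely the issue the $L^\infty$ decay rules out: the local existence time in \eqref{T} depends only on $\|u_0\|$ through the nondecreasing function $K$, so controlling the norm controls the time step.

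Concretely, I would first invoke Theorem \ref{thm:wellp} to obtain a unique solution $u \in C([0,T_0], L^\infty(\re^N))$ on $[0,T_0]$, where $T_0$ is given by \eqref{T} with $\epsilon_0 = 2\|u_0\|$. Then I would restart the IVP from time $T_0$ using the new initial datum $u(T_0, \cdot)$, which belongs to $L^\infty(\re^N)$ and automatically agrees with $\psi$ on $\re^N\setminus\Om$ by the boundary condition. Theorem \ref{thm:wellp} applied at time $T_0$ yields a unique extension on $[T_0, T_0 + T_1]$, where $T_1$ is determined by \eqref{T} with $\epsilon_1 = 2\|u(T_0,\cdot)\|$. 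By Lemma \ref{lem:Linf}, $\|u(T_0,\cdot)\| \le \|u_0\|$, so $\epsilon_1 \le \epsilon_0$; since $K$ is nondecreasing, the denominator in \eqref{T} can only decrease, and hence $T_1 \ge T_0$. Iterating, after $n$ restarts we have a solution on $[0, nT_0]$, and since $nT_0 \to \infty$ the construction covers $[0,\infty)$. Continuity across the joining times $nT_0$ is automatic from the matching of endpoints, and uniqueness propagates identically: any second solution $v$ must coincide with $u$ on $[0,T_0]$ by the uniqueness clause of Theorem \ref{thm:wellp}, hence on $[T_0, 2T_0]$ by applying the same clause with initial datum $u(T_0,\cdot) = v(T_0,\cdot)$, and so on inductively.

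The only conceptual obstacle is ensuring that the iteration does not degenerate; this is handled entirely by Lemma \ref{lem:Linf}, which in turn rests on the Strong Maximum Principle (Theorem \ref{thm:smp}) and therefore tacitly requires the standing hypothesis \eqref{k3} on $k$. Beyond this, the argument is a standard concatenation of local existence steps, and no further estimate on $k$, $J$, or $\psi$ is needed once the norm monotonicity is in hand.
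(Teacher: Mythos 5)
Your proposal is correct and follows essentially the same route as the paper: iterate the local existence time from Theorem \ref{thm:wellp}, use the $L^\infty$ decay of Lemma \ref{lem:Linf} to ensure the norm cannot grow (and hence the time step from \eqref{T} cannot shrink), and concatenate. The paper simply reuses the fixed step $T$ at each stage rather than remarking, as you do, that the step can only lengthen, but this is a cosmetic difference; your remark about uniqueness propagating inductively is also the intended reading.
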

\begin{proof}
Let $u^0$ be the unique solution from Theorem \ref{thm:wellp}.  Let $u^1(t,x)=u^0(t-T,x)$ for $t\ge T$, where $T$ is in \eqref{T}.  Then, by Lemma \ref{lem:Linf}, $\|u^1(0,x)\|_{\infty}\le\|u(0,x)\|$, and $u^1|_{\re^N\setminus\Om}=u|_{\re^N\setminus\Om}=\psi$.  We conclude that a $C[0,T]\cap L^\infty(\re^N)$ solution $u^1$ to \eqref{ND} exists and is unique.  Letting $u^n(t,x)=u^{n-1}(t-T,x)$ for $t\ge nT$ for $n=1,2,3,...$, we see inductively that a unique $C[0,T]\cap L^\infty(\re^N)$ solution $u^n$ to \eqref{ND} exists.  Thus, the function $u$ defined by
\begin{align*}
u(t,x)=u^n(t-nT,x),\quad nT\le t\le (n+1)T,\quad n=0,1,2,3,...
\end{align*}
is a unique solution to \eqref{ND} for all $t\ge 0$.
\end{proof}

\section*{Acknowledgments}
We thank Petronela Radu for useful discussions.  This work was partially funded by the NSF DMS Award 1263132.

\bibliographystyle{abbrv} 

\bibliography{NonlocalREU15}






\end{document}